\documentclass[12pt,reqno]{amsart}
\usepackage{amsmath, amsthm, amssymb, stmaryrd}

\topmargin .1cm
\advance \topmargin by -\headheight
\advance \topmargin by -\headsep
     
\setlength{\paperheight}{270mm}%
\setlength{\paperwidth}{192mm}%
\textheight 22.5cm
\oddsidemargin .1cm
\evensidemargin \oddsidemargin
\marginparwidth 1.25cm
\textwidth 14cm
\setlength{\parskip}{0.05cm}

\newtheorem{theorem}{Theorem}[section]
\newtheorem{lemma}[theorem]{Lemma}

\theoremstyle{definition}

\theoremstyle{remark}

\numberwithin{equation}{section}

\def\bfx{{\mathbf x}}
\def\bfy{{\mathbf y}}

\def\calC{{\mathcal C}}

\def\dbN{{\mathbb N}}  
\def\dbR{{\mathbb R}}
\def\dbZ{{\mathbb Z}}

\def\grB{{\mathfrak B}}

\def\grm{{\mathfrak m}}\def\grM{{\mathfrak M}}

\def\grB{{\mathfrak B}}

\def\alp{{\alpha}} \def\bfalp{{\boldsymbol \alpha}}
\def\bet{{\beta}}  
\def\gam{{\gamma}} 

\def\del{{\delta}}

\def\eps{\varepsilon}

\def\le{\leqslant} \def\ge{\geqslant}

\def\d{{\,{\rm d}}}

\begin{document}
\title[A paucity problem]{A paucity problem for certain triples of diagonal equations}
\author[J\"org Br\"udern]{J\"org Br\"udern}
\address{Mathematisches Institut, Bunsenstrasse 3--5, D-37073 G\"ottingen, Germany}
\email{joerg.bruedern@mathematik.uni-goettingen.de}
\author[Trevor D. Wooley]{Trevor D. Wooley}
\address{Department of Mathematics, Purdue University, 150 N. University Street, West 
Lafayette, IN 47907-2067, USA}
\email{twooley@purdue.edu}
\subjclass[2010]{11D45, 11P05, 11P55}
\keywords{Diophantine equations, paucity, Hardy-Littlewood method.}
\date{}
\dedicatory{}
\begin{abstract}We consider certain systems of three linked simultaneous diagonal 
equations in ten variables with total degree exceeding five. By means of a complification 
argument, we obtain an asymptotic formula for the number of integral solutions of this 
system of bounded height that resolves the associated paucity problem.
\end{abstract}
\maketitle

\section{Introduction} In this note we investigate the simultaneous Diophantine equations
\begin{equation}\label{1.1}
\sum_{i=1}^5 (x_i^k-y_i^k)=\sum_{i=1}^3 (x_i^n-y_i^n)=\sum_{i=4}^5 
(x_i^m-y_i^m)=0,
\end{equation}
focusing our attention on the number $N_{k,m,n}(B)$ of integral solutions $\bfx,\bfy$ of 
this system satisfying $1\le x_i,y_i\le B$ $(1\le i\le 5)$. These equations admit the 
diagonal solutions with
\[
\{x_1,x_2,x_3\}=\{y_1,y_2,y_3\}\quad \text{and}\quad \{x_4,x_5\}=\{y_4,y_5\},
\]
contributing an amount
\begin{equation}\label{1.2}
T(B)=(3!B^3+O(B^2))(2!B^2+O(B))=12B^5+O(B^4)
\end{equation}
to the total count $N_{k,m,n}(B)$. Whether or not one should expect an abundance of 
non-diagonal solutions to the system (\ref{1.1}) depends on the triple $(k,m,n)$. Excluding 
from consideration the degenerate cases in which $k\in \{m,n\}$, the goal of this paper is 
the characterisation of the triples $(k,m,n)$ for which there is a paucity of non-diagonal 
solutions.

\begin{theorem}\label{theorem1.1}
Suppose that $(k,m,n)\ne (3,1,1)$, and further that neither $(k,n)=(2,1)$ nor 
$(k,n)=(1,2)$. Then, for any positive number $\del$ with $\del<1/12$, one has
\[
N_{k,m,n}(B)=12B^5+O(B^{5-\del}).
\]
\end{theorem}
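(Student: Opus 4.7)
The plan is to apply the Hardy--Littlewood method. Setting
\[
f(\alpha,\beta)=\sum_{1\le x\le B}e(\alpha x^k+\beta x^n),\qquad g(\alpha,\gamma)=\sum_{1\le x\le B}e(\alpha x^k+\gamma x^m),
\]
orthogonality delivers
\[
N_{k,m,n}(B)=\iiint_{[0,1)^3}|f(\alpha,\beta)|^6|g(\alpha,\gamma)|^4\,d\alpha\,d\beta\,d\gamma.
\]
Integrating first over $\beta$ and $\gamma$ rewrites this as $N_{k,m,n}(B)=\int_0^1 F(\alpha)G(\alpha)\,d\alpha$, where $F(\alpha)$ sums $e\bigl(\alpha\sum_{i=1}^3(x_i^k-y_i^k)\bigr)$ over tuples $(\bfx,\bfy)\in[1,B]^6$ with $\sum_{i=1}^3(x_i^n-y_i^n)=0$, and $G(\alpha)$ is the analogous exponential sum over two pairs of variables constrained by $\sum_{i=4}^5(x_i^m-y_i^m)=0$.

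Next I would split $F=F_0+F_1$ and $G=G_0+G_1$, where $F_0$ collects the matched-set tuples with $\{x_1,x_2,x_3\}=\{y_1,y_2,y_3\}$, and $G_0$ those with $\{x_4,x_5\}=\{y_4,y_5\}$. Such tuples automatically satisfy $\sum(x_i^k-y_i^k)=0$, so $F_0$ and $G_0$ are constants in $\alpha$, equal respectively to $3!B^3+O(B^2)$ and $2!B^2+O(B)$. Expanding $FG$ yields
\[
N_{k,m,n}(B)=F_0G_0+G_0\int_0^1\! F_1\,d\alpha+F_0\int_0^1\! G_1\,d\alpha+\int_0^1\! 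F_1(\alpha)G_1(\alpha)\,d\alpha,
\]
the leading term being $12B^5+O(B^4)$, matching \eqref{1.2}. The two middle integrals are the counts of non-diagonal solutions to the sub-systems
\[
\sum_{i=1}^3(x_i^k-y_i^k)=\sum_{i=1}^3(x_i^n-y_i^n)=0\quad\text{and}\quad\sum_{i=4}^5(x_i^k-y_i^k)=\sum_{i=4}^5(x_i^m-y_i^m)=0,
\]
which under the hypotheses on $(k,m,n)$ obey paucity with some saving $B^{-\eta}$; the prefactors $G_0\asymp B^2$ and $F_0\asymp B^3$ then make both contributions $O(B^{5-\eta})$.

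The hard part will be estimating the cross term $\int_0^1 F_1 G_1\,d\alpha$. Cauchy--Schwarz reduces this to bounds of the shape
\[
\int_0^1|F_1(\alpha)|^2\,d\alpha\ll B^{6-\eta},\qquad \int_0^1|G_1(\alpha)|^2\,d\alpha\ll B^{4-\eta},
\]
and, via orthogonality, these $L^2$-integrals count solutions to enlarged symmetric systems in $12$ and $8$ variables, comprising two copies of the lower-degree equation coupled by a single $k$-th power equation. Subtracting the diagonal, what is needed are asymptotic formulae $\int|F|^2=F_0^2+O(B^{6-\eta})$ and $\int|G|^2=G_0^2+O(B^{4-\eta})$. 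This is the ``complification'' stage: although the inner 6- and 4-variable paucities are classical, transferring them to the enlarged 12- and 8-variable systems with the coupling degree-$k$ equation is delicate and requires a further circle method analysis in $\alpha$, through a major/minor arc dissection governed by Weyl-type bounds for the degree-$k$ exponential sum on the minor arcs and singular series computations on the majors. The excluded triples $(3,1,1)$, $(k,n)=(2,1)$ and $(k,n)=(1,2)$ are precisely those for which paucity breaks down in one of these enlarged systems, and the permissible exponent $\del<1/12$ reflects the quantitative saving one can uniformly extract.
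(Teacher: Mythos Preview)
Your decomposition $N=F_0G_0+G_0\int F_1+F_0\int G_1+\int F_1G_1$ is essentially the paper's identity $N=\sum_h u(h)v(h)$ with $u(h),v(h)$ the Fourier coefficients of your $F,G$; the main term and the two middle terms are handled exactly as the paper does via (\ref{2.9}) and (\ref{2.11}). The gap is in your treatment of the cross term. The target bound $\int_0^1|F_1(\alp)|^2\,\d\alp\ll B^{6-\eta}$, equivalently $\sum_h u(h)^2=F_0^2+O(B^{6-\eta})$, is \emph{false} for the small values of $k$ that matter most. When $n=1$ and $k=3$, for instance, the constraint $\sum_i(x_i-y_i)=0$ leaves $\asymp B^5$ tuples, and $h=\sum_i(x_i^3-y_i^3)$ spreads over $O(B^3)$ values, so typically $u(h)\asymp B^2$ and $\sum_h u(h)^2\asymp B^7$; for $n=2$ the paper only proves $\sum_h u_{k,2}(h)^2\ll B^{19/3+\eps}$, again exceeding $B^6$. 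What actually saves the day is that $\sum_{h\ne 0}v(h)^2\ll B^{3+\eps}$ is a full power below the naive $B^4$ (Lemma~\ref{lemma5.1}), and the Cauchy--Schwarz product $(B^{19/3+\eps})^{1/2}(B^{3+\eps})^{1/2}=B^{14/3+\eps}$ still beats $B^5$. So your Cauchy step is right but the bookkeeping of exponents is not symmetric; all the saving must come from the $G$-block.

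Even with this correction, the case $(k,m,n)=(3,2,1)$ does not go through by Cauchy alone: here $\sum_h u_{3,1}(h)^2$ is of size about $B^7$ and $(B^7)^{1/2}(B^{3+\eps})^{1/2}=B^{5+\eps}$ gives no saving. The paper's remedy (Lemma~\ref{lemma7.3}) is not to prove a better bound on $\int|F_1|^2$, which is impossible, but to restrict $\alp$ to minor arcs first, pull out two factors of $f_{3,1}$ via Weyl's inequality $\sup_{\alp\in\grm}|f_{3,1}(\alp,\bet)|\ll B^{1-\del/4+\eps}$, and bound the remaining tenth moment by $N_{3,1,1}(B)\ll B^5$. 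Your final paragraph gestures at a circle-method step but frames it as a device for establishing the impossible bound $\int|F|^2=F_0^2+O(B^{6-\eta})$ rather than as a substitute for it. Finally, the exclusions $(k,n)\in\{(2,1),(1,2)\}$ already break your \emph{middle} term $G_0\int F_1$, since $u_{k,n}(0)$ itself acquires a logarithm (Blomer--Br\"udern); it is not the enlarged $12$-variable system that fails there.
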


In \S2 we show that when $(k,n)$ is either $(2,1)$ or $(1,2)$, one has
\begin{equation}\label{1.3}
N_{k,m,n}(B)\gg B^5\log (2B).
\end{equation}
Moreover, as a consequence of our earlier work \cite{BW2019}, one may show that
\begin{equation}\label{1.4}
N_{3,1,1}(B)-T(B)\gg B^5.
\end{equation}
For all other triples $(k,m,n)$ with $k\not\in \{m,n\}$, it follows from Theorem 
\ref{theorem1.1} that
\[
N_{k,m,n}(B)=T(B)+o(T(B)),
\]
whence there is a paucity of non-diagonal solutions in the system (\ref{1.1}).\par

It would be possible to extend our methods from the counting problem of estimating 
$N_{k,m,n}(B)$ to the associated problem of estimating the quantity $N_{k,m,n}^\pm(B)$, 
 wherein the solutions of (\ref{1.1}) are counted with $|x_i|,|y_i|\le B$. By weakening the 
condition $1\le x_i,y_i\le B$ so as to include also negative solutions of (\ref{1.1}), one 
encounters additional linear spaces of solutions, and thus the asymptotic formula 
$N_{k,m,n}(B)=12B^5+O(B^{5-\del})$ must be replaced by the relation
\[
N_{k,m,n}^\pm (B)=\rho_{k,m,n}B^5+O(B^{5-\del}),
\]
where $\rho_{k,m,n}$ is a certain positive integer depending on the respective parities of 
$k$, $m$ and $n$. The exposition of our ideas would be significantly complicated and 
lengthened by the associated combinatorial details, as much by additional notation as 
anything of substance. Dedicated readers may check the details for themselves.\par

Existing paucity results for a single equation in four variables, and for pairs of equations in 
six variables, play a role in our proof of Theorem \ref{theorem1.1}. However, the ideas 
underlying such results would be insufficient by themselves to deliver the conclusion of 
our theorem. We instead reach for the strategy described in our recent work 
\cite{BW2019} concerning diagonal cubic equations with two linear slices. This work, which 
addresses the case $(k,m,n)=(3,1,1)$ of the system (\ref{1.1}), and yields an asymptotic 
formula confirming the lower bound (\ref{1.4}), involves an application of the 
Hardy-Littlewood method in combination with a certain complification argument. Our 
approach in the present note once again highlights the opportunity for powerful interplay 
between equations to be exploited when analysing systems of many diagonal equations. 
We refer the reader to \cite{BW2003} and \cite{BW2016a} for earlier instances in which 
such an observation has been utilised.\par

This paper is organised as follows. In \S2 we introduce the infrastructure required for the 
subsequent discussion, justifying {\it en passant} the relations (\ref{1.3}) and (\ref{1.4}). 
A paucity result involving four $m$-th powers in \S3 handily disposes of triples $(k,m,n)$ 
with $m\ge 3$. We examine in \S4 an upper bound for the number of non-zero integers 
$h$ represented by the trailing block
\begin{align*}
x_4^k-y_4^k+x_5^k-y_5^k&=h\\
x_4^m-y_4^m+x_5^m-y_5^m&=0
\end{align*}
in (\ref{1.1}). Thus equipped, we dispose of triples $(k,m,n)$ with $n\ge 3$. The 
complification process comes into play in \S\S5-7. Here, an application of Cauchy's inequality 
relates non-diagonal solutions in the system (\ref{1.1}) to the number of solutions of a 
related system in $12$ variables having respective degrees $k$, $n$ and $n$. The simplest 
application of this idea handles triples $(k,m,n)$ in \S5 with $n=2$. Then, in \S6, a similar 
argument takes care of triples $(k,m,n)$ with $n=1$ and $k\ge 4$. Our final case awaits 
our attention in \S7, namely that with $(k,m,n)=(3,2,1)$. In this situation we are forced to 
apply a crude version of the Hardy-Littlewood method in concert with complification, 
drawing inspiration from aspects of our treatment of the case $(k,m,n)=(3,1,1)$ in 
\cite{BW2019}.\par  

Our basic parameter is $B$, a sufficiently large positive number. Whenever $\eps$ appears 
in a statement, either implicitly or explicitly, we assert that the statement holds for each 
$\eps>0$. In this paper, implicit constants in Vinogradov's notation $\ll$ and $\gg$ may 
depend on $\eps$, $k$, $m$ and $n$. We make frequent use of vector notation in the form 
$\bfx=(x_1,\ldots,x_r)$. Here, the dimension $r$ depends on the course of the argument. 
Finally, we write $e(z)$ for $e^{2\pi iz}$.\par
 
\noindent {\bf Acknowledgements:} The authors acknowledge support by Akademie der 
Wissenschaften zu G\"ottingen and Deutsche Forschungsgemeinschaft Project Number 
255083470. The second author's work is supported by the NSF Focused Research Group 
grant DMS-1854398 and DMS-2001549. 

\section{Infrastructure and the excluded cases} We fix a triple $(k,m,n)$ with 
$k\not\in \{m,n\}$. Defining the exponential sum
\[
f_{k_1,k_2}(\alp_1,\alp_2)=\sum_{1\le x\le B}e(\alp_1x^{k_1}+\alp_2x^{k_2}),
\]
it follows via orthogonality that
\begin{equation}\label{2.1}
N_{k,m,n}(B)=\int_{[0,1)^3}|f_{k,n}(\alp,\bet)^6f_{k,m}(\alp,\gam)^4|\d\bfalp ,
\end{equation}
where we use $\bfalp$ to denote $(\alp,\bet,\gam)$.\par

For the time-being, it suffices to decompose the mean value (\ref{2.1}) by introducing the 
auxiliary integrals $u(h)=u_{k,n}(h)$ and $v(h)=v_{k,m}(h)$, defined by
\begin{equation}\label{2.2}
u(h)=\int_{[0,1)^2}|f_{k,n}(\alp,\bet)|^6e(-h\alp)\d\alp \d\bet 
\end{equation}
and
\begin{equation}\label{2.3}
v(h)=\int_{[0,1)^2}|f_{k,m}(\alp,\gam)|^4e(-h\alp)\d\alp \d\gam .
\end{equation}
Here, by orthogonality, one sees that $u(h)$ counts the representations of the integer $h$ 
in the form
\begin{equation}\label{2.4}
\sum_{i=1}^3(x_i^k-y_i^k)=h
\end{equation}
subject to
\begin{equation}\label{2.5}
\sum_{i=1}^3(x_i^n-y_i^n)=0,
\end{equation}
with $1\le x_i,y_i\le B$ $(1\le i\le 3)$. Likewise, we find from (\ref{2.3}) that $v(h)$ counts 
the number of solutions of the system
\begin{align}
x_1^k+x_2^k-y_1^k-y_2^k&=h,\label{2.6}\\
x_1^m+x_2^m-y_1^m-y_2^m&=0,\label{2.7}
\end{align}
with $1\le x_i,y_i\le B$ $(i=1,2)$. Thus, we see that
\begin{equation}\label{2.8}
N_{k,m,n}(B)=\sum_{|h|\le 2B^k}u(h)v(h).
\end{equation}

\par We pause at this point to remark that, as a consequence of the work of the first 
author joint with Blomer \cite{BB2010}, one has the asymptotic formula
\[
u_{2,1}(0)=u_{1,2}(0)=\frac{18}{\pi^2}B^3\log B+O(B^3).
\]
Moreover, when $m\ne k$, it follows that whenever $\bfx,\bfy\in \dbN^2$ and
\begin{align*}
x_1^k+x_2^k&=y_1^k+y_2^k\\
x_1^m+x_2^m&=y_1^m+y_2^m,
\end{align*}
then $\{x_1,x_2\}=\{y_1,y_2\}$. This assertion may be confirmed either by elementary 
arguments, or by reference to \cite{Ste1971}. It follows that one has the asymptotic 
relation
\begin{equation}\label{2.9}
v_{k,m}(0)=2B^2+O(B).
\end{equation}
By substituting these estimates into (\ref{2.8}), we conclude that
\[
N_{2,m,1}(B)\ge u_{2,1}(0)v_{2,m}(0)\gg B^5\log (2B),
\]
and likewise
\[
N_{1,m,2}(B)\ge u_{1,2}(0)v_{1,m}(0)\gg B^5\log (2B).
\]
The lower bound (\ref{1.3}) follows.\par

The relation (\ref{1.4}), though essentially immediate from \cite{BW2019}, merits some 
discussion. In the latter source, it is shown that
\[
N_{3,1,1}^\pm (B)=(45+\calC)(2B)^5+O(B^{5-1/200}),
\]
where $\calC>0$ is a product of local densities. Here, the constant $45$ is associated with 
the number of linear spaces of solutions of the system (\ref{1.1}) in the case 
$(k,m,n)=(3,1,1)$ generalising the diagonal solutions relevant to our examination of 
$N_{3,1,1}(B)$. Excluding solutions of (\ref{1.1}) involving negative integers simplifies the 
analysis of \cite{BW2019} somewhat, and thus one may proceed at a pedestrian pace to 
obtain the asymptotic formula
\[
N_{3,1,1}(B)=(12+\calC')B^5+O(B^{5-1/200}),
\]
where $\calC'>0$ is the product of local densities associated with the system (\ref{1.1}) in 
the positive sector. In particular, in view of (\ref{1.2}), one has the relation
\[
N_{3,1,1}(B)-T(B)\sim \calC'B^5,
\]
confirming the lower bound (\ref{1.4}).\par

Having discussed the excluded cases, we proceed in the remainder of the paper under the 
assumption that
\begin{equation}\label{2.10}
(k,n)\not\in \{ (2,1), (1,2)\}\quad \text{and}\quad (k,m,n)\ne (3,1,1).
\end{equation}
Since also $k\not\in \{m,n\}$, we may assume that one of the following holds:
\begin{enumerate}
\item[(i)] $m\ge 3$;
\item[(ii)] $m\in \{1,2\}$ and $n\ge 3$;
\item[(iii)] $m\in \{1,2\}$, $n=2$ and $k\ge 3$;
\item[(iv)] $m\in\{1,2\}$, $n=1$ and $k\ge 4$;
\item[(v)] $(k,m,n)=(3,2,1)$.
\end{enumerate}

\par Notice that the first condition in (\ref{2.10}) ensures, via available paucity results, that
\begin{equation}\label{2.11}
u_{k,n}(0)=6B^3+O(B^{8/3}).
\end{equation}
A convenient reference for a result of this strength may be obtained by combining 
\cite[Theorem 1.2]{VW1995}, when $(k,n)=(3,1)$, with \cite[Theorem 1]{Woo1996}, when 
$(k,n)=(3,2)$, and \cite[Corollary 0.3]{Sal2007}, when $k\ge 4$. By combining this 
conclusion with (\ref{2.9}), we see from (\ref{2.8}) that
\begin{align}
N_{k,m,n}(B)&=u_{k,n}(0)v_{k,m}(0)+\sum_{1\le |h|\le 2B^k}u_{k,n}(h)v_{k,m}(h)\notag 
\\
&=12B^5+\sum_{1\le |h|\le 2B^k}u_{k,n}(h)v_{k,m}(h)+O(B^{14/3}).\label{2.12}
\end{align}

\par Our task in the remaining sections is to analyse the sum on the right hand side of 
(\ref{2.12}). We claim that for the triples $(k,m,n)$ classified in the cases (i) to (v) above, 
for any positive number $\eta<1/12$, one has
\begin{equation}\label{2.13}
\sum_{1\le |h|\le 2B^k}u_{k,n}(h)v_{k,m}(h)\ll B^{5-\eta}.
\end{equation}
By substituting this estimate into (\ref{2.12}), we infer that
\[
N_{k,m,n}(B)=12B^5+O(B^{5-\eta}),
\]
and the conclusion of Theorem \ref{theorem1.1} follows.

\section{Paucity for four $m$-th powers} Our first step towards the proof of Theorem 
\ref{theorem1.1} is the discussion of triples $(k,m,n)$ of type (i), with $m\ge 3$. Here we 
make use of available upper bounds for the number $w_m(B)$ of solutions $\bfx,\bfy$ of 
the equation
\[
x_1^m+x_2^m=y_1^m+y_2^m,
\]
with $\{x_1,x_2\}\ne \{y_1,y_2\}$ and $1\le x_i,y_i\le B$ $(i=1,2)$.

\begin{lemma}\label{lemma3.1} When $m\ge 3$, one has $w_m(B)\ll B^{5/3+\eps}$.
\end{lemma}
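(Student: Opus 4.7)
My approach would be to combine the natural factorisation of $x^m - y^m$ with divisor-type estimates, handled differently according to the size of the exponent $m$.

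First, for a non-diagonal solution of $x_1^m + x_2^m = y_1^m + y_2^m$, the multisets $\{x_1, x_2\}$ and $\{y_1, y_2\}$ must be disjoint, since a shared element would force the remaining pair to coincide as well. Relabelling if necessary, I may therefore assume $x_1 > y_1$ and $y_2 > x_2$. Setting $d = x_1 - y_1$, $e = y_2 - x_2$, and $P(s, t) = s^{m-1} + s^{m-2}t + \cdots + t^{m-1}$, the equation takes the convenient form
\[
d \cdot P(x_1, y_1) = e \cdot P(y_2, x_2).
\]

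I would then split into ranges based on $\max(d, e)$. In the range $\max(d, e) > B^{2/3}$, the count admits a direct divisor treatment: fixing $d$ and $x_1$ determines $y_1 = x_1 - d$ and the value $h := d \cdot P(x_1, y_1)$, after which the divisor structure of $h$ restricts $(x_2, y_2)$ to $O(B^\eps)$ choices; summation over $d$ and $x_1$ then gives a contribution comfortably below $B^{5/3+\eps}$. In the complementary range $\max(d, e) \le B^{2/3}$, one bounds the count by summing, over such $(d, e)$, the integer points on the curve of degree $m-1$ in two variables cut out by the displayed equation, using the leading behaviour $d y_1^{m-1} \approx e x_2^{m-1}$ to reduce to a divisor-average estimate over the common value $h$.

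The main obstacle is the critical case $m = 3$, in which the target $B^{5/3+\eps}$ matches the conjectural truth and calls for Hooley's classical paucity theorem for equal sums of two cubes; that argument exploits the arithmetic of $\dbZ[\omega]$ and the geometry of the associated cubic surface, and is genuinely delicate. For $m \ge 4$, by contrast, the conjectural true order of $w_m(B)$ is already $B^{1+\eps}$, so the weaker bound we require is well within reach of the elementary divisor-counting outlined above. In practice I would invoke the appropriate paucity theorems from the existing literature, treating $m = 3$ and $m \ge 4$ separately, rather than reproducing the underlying machinery in this note.
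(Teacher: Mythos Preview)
The paper's own proof is simply a citation: it quotes Hooley's two papers (for odd and even $m$ respectively) together with Salberger's later refinement, and leaves it at that. Your closing sentence---that in practice you would invoke the existing paucity theorems rather than reproduce the machinery---is therefore exactly in line with the paper.

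Where your proposal goes astray is in the elementary sketch preceding that deferral. In the range $\max(d,e)>B^{2/3}$ you fix $(d,x_1)$, note that this determines $y_1$ and $h$, and then use the divisor structure of $h$ to pin down $(x_2,y_2)$ up to $O(B^\eps)$. That part is fine, but the number of pairs $(d,x_1)$ with $1\le d<x_1\le B$ is of order $B^2$, not $B^{5/3}$; the restriction $d>B^{2/3}$ (or $e>B^{2/3}$) removes only a lower-order piece of this. So the large-difference range contributes $O(B^{2+\eps})$ by your argument, not something ``comfortably below $B^{5/3+\eps}$''. For $m=3$ the exponent $5/3$ is the sharpest known and genuinely requires Hooley's sieve input over $\dbZ[\omega]$; but even for $m\ge 4$, the divisor-counting you describe does not by itself beat the trivial $B^{2+\eps}$, and one again needs nontrivial tools (Hooley's second paper, or the determinant-method results of Salberger) to reach $B^{5/3+\eps}$. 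So the sketch should be dropped, leaving only the citation---which is precisely what the paper does.
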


\begin{proof} Perhaps the most convenient references for this conclusion are the papers 
\cite{Hoo1981} and \cite{Hoo1996}, respectively dealing with odd and even exponents 
$m$. More recent developments can be perused in \cite[Corollary 0.2]{Sal2008} and the 
associated discussion.
\end{proof}

We are now equipped to establish the main conclusion of this section.

\begin{lemma}\label{lemma3.2} Suppose that $(k,n)\not\in \{ (2,1), (1,2)\}$ and 
$k\not \in \{m,n\}$. Then whenever $m\ge 3$, one has
\[
N_{k,m,n}(B)-12B^5\ll B^{14/3+\eps}.
\]
\end{lemma}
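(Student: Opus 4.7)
The plan is to use the decomposition in (\ref{2.12}) and bound the off-diagonal sum
\[
S(B) = \sum_{1\le |h|\le 2B^k} u_{k,n}(h) v_{k,m}(h)
\]
by a pointwise bound on $u_{k,n}(h)$ combined with a summed bound on $v_{k,m}(h)$.

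First, I would establish the trivial pointwise bound $u_{k,n}(h) \ll B^3$, uniformly in $h$. This follows directly from the integral representation (\ref{2.2}): since $|e(-h\alp)| = 1$, the triangle inequality gives $|u_{k,n}(h)| \le u_{k,n}(0)$, and then (\ref{2.11}) shows $u_{k,n}(0) \ll B^3$. (Note that (\ref{2.11}) is legitimate here because hypothesis (\ref{2.10}) together with $m \ge 3$ places us in case (i), so the paucity estimate is available.)

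Next, I would sum $v_{k,m}(h)$ over nonzero $h$. Each solution $(\bfx,\bfy)$ counted by (\ref{2.6})--(\ref{2.7}) contributes to the unique value $h = x_1^k+x_2^k-y_1^k-y_2^k$. By the argument leading to (\ref{2.9}), since $k \ne m$, any solution of the $m$-th power equation (\ref{2.7}) with $\{x_1,x_2\} = \{y_1,y_2\}$ automatically forces $h=0$, and conversely every solution of (\ref{2.7}) with $\{x_1,x_2\} \ne \{y_1,y_2\}$ necessarily yields $h \ne 0$. Consequently
\[
\sum_{1\le |h|\le 2B^k} v_{k,m}(h) = w_m(B),
\]
and Lemma \ref{lemma3.1} delivers the bound $w_m(B) \ll B^{5/3+\eps}$, valid because $m \ge 3$.

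Combining these two estimates, I obtain
\[
S(B) \le \Bigl( \max_{h \ne 0} u_{k,n}(h) \Bigr) \sum_{1\le |h|\le 2B^k} v_{k,m}(h) \ll B^3 \cdot B^{5/3+\eps} = B^{14/3+\eps}.
\]
Substituting into (\ref{2.12}) yields the asserted estimate $N_{k,m,n}(B) - 12B^5 \ll B^{14/3+\eps}$.

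There is no real obstacle in this argument; everything is assembled from results already in hand. The only mild subtlety is confirming that the contribution to $\sum_{h\ne 0} v_{k,m}(h)$ is genuinely counted by $w_m(B)$, which rests on the elementary observation (used already in \S2) that for $k \ne m$ the simultaneous equality of two power-sums in two variables forces the sets to coincide. With that in place, the proof is essentially a one-line application of the paucity bound for four $m$-th powers together with the trivial transform bound on $u_{k,n}(h)$.
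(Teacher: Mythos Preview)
Your proposal is correct and follows essentially the same route as the paper: bound $\sup_h u_{k,n}(h)$ by $u_{k,n}(0)\ll B^3$ via the triangle inequality and (\ref{2.11}), bound $\sum_{h\ne 0}v_{k,m}(h)$ by $w_m(B)\ll B^{5/3+\eps}$ via Lemma~\ref{lemma3.1}, and substitute into (\ref{2.12}). The only cosmetic difference is that you assert equality $\sum_{h\ne 0}v_{k,m}(h)=w_m(B)$ (which is indeed correct, by the Steinig-type observation in \S2), whereas the paper is content with the inequality $\le$.
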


\begin{proof} Suppose that $\bfx,\bfy$ is a solution of the equations (\ref{2.6}) and 
(\ref{2.7}) with $1\le x_i,y_i\le B$ $(i=1,2)$. When $\{x_1,x_2\}=\{y_1,y_2\}$, one must 
have $h=0$. Thus, when $h\ne 0$, it follows that $\{x_1,x_2\}\ne \{y_1,y_2\}$, whence 
$\bfx,\bfy$ is counted by $w_m(B)$. In particular, one has
\[
\sum_{1\le |h|\le 2B^k}v_{k,m}(h)\le w_m(B)\ll B^{5/3+\eps},
\]
and consequently,
\begin{align}
\sum_{1\le |h|\le 2B^k}u_{k,n}(h)v_{k,m}(h)&\le \left( \sup_h u_{k,n}(h)\right) 
\sum_{1\le |h|\le 2B^k}v_{k,m}(h)\notag \\
&\ll B^{5/3+\eps}\sup_h u_{k,n}(h).\label{3.1}
\end{align}
By the triangle inequality, it follows from (\ref{2.2}) that
\[
\sup_h u_{k,n}(h)\le \int_{[0,1)^2}|f_{k,n}(\alp,\bet)|^6\d\alp \d\bet =u_{k,n}(0).
\]
Thus, on substituting this estimate into (\ref{3.1}) and recalling (\ref{2.11}), we find that
\[
\sum_{1\le |h|\le 2B^k}u_{k,n}(h)v_{k,m}(h)\ll B^{5/3+\eps}\cdot B^3=B^{14/3+\eps}.
\]
The conclusion of the lemma is now immediate from (\ref{2.12}).
\end{proof}

\section{An upper bound for $v(h)$} We next consider triples $(k,m,n)$ of type (ii), with 
$m\in \{1,2\}$ and $n\ge 3$. Our strategy applies bounds for $v_{k,m}(h)$ going beyond 
square-root cancellation.

\begin{lemma}\label{lemma4.1} Suppose that $h\ne 0$. Then
\begin{enumerate}
\item[(i)] when $k>2$, one has $v_{k,1}(h)\ll |h|^\eps$;
\item[(ii)] when $k\ne 2$, one has $v_{k,2}(h)\ll |h|^\eps B^{1+\eps}$;
\item[(iii)] one has $v_{2,1}(h)\ll |h|^\eps B$.
\end{enumerate}
\end{lemma}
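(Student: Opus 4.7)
The plan for all three bounds is uniform: impose the lower-degree companion equation by passing to symmetric variables, then factorise the higher-degree equation to express $h$ as a product of integers. The divisor bound $d(n)\ll n^\eps$ controls the number of factorisations, and the remaining free parameters are pinned down by polynomial degree considerations. For (iii), the linear equation forces $s=x_1+x_2=y_1+y_2$; substituting $x_2=s-x_1$, $y_2=s-y_1$ in the quadratic equation converts it to the factorisation $(y_1-x_1)(s-x_1-y_1)=h/2$, so with $O(B)$ choices of $s$ and $O(|h|^\eps)$ ordered factorisations of $h/2$ one gets $v_{2,1}(h)\ll |h|^\eps B$.

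For (i), set $\phi_s(t)=t^k+(s-t)^k$ with $s=x_1+x_2=y_1+y_2$ as above. The reflection symmetry $\phi_s(t)=\phi_s(s-t)$ forces $\phi_s(x_1)-\phi_s(y_1)$ to vanish at $x_1=y_1$ and at $x_1+y_1=s$, so in the variables $u=x_1-y_1$ and $w=x_1+y_1-s$ the $k$-th-power equation takes the shape
\[
uw\,T_k(s,u,w)=2^{k-2}h
\]
for some polynomial $T_k$. A binomial expansion of $(s\pm w\pm u)^k$, grouping sign combinations, yields $T_k(s,0,0)=k(k-1)s^{k-2}$, so $T_k$ has degree exactly $k-2\ge 1$ in $s$. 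Hence $uw$ divides $2^{k-2}h$, giving $O(|h|^\eps)$ admissible pairs; for each, $T_k(s,u,w)=2^{k-2}h/(uw)$ is a polynomial equation in $s$ of degree $k-2$, with at most $O_k(1)$ roots. This delivers $v_{k,1}(h)\ll |h|^\eps$.

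For (ii) with $k\ge 3$, introduce $s_x=x_1+x_2$, $s_y=y_1+y_2$, and the common value $A=x_1^2+x_2^2=y_1^2+y_2^2$ supplied by the $m=2$ equation. Newton's identities applied to the roots of $z^2-sz+(s^2-A)/2$ give $x_1^k+x_2^k=\Phi_k(s_x,A)$ for a polynomial $\Phi_k$, and factoring $s_x-s_y$ out of $\Phi_k(s_x,A)-\Phi_k(s_y,A)=h$ produces $d\,G_k(s_x,s_y,A)=c_k h$ with $d=s_x-s_y$, where $G_k$ has positive degree in $A$. Hence $d$ divides $c_k h$, contributing $O(|h|^\eps)$ choices; for each such $d$ and each of the $O(B)$ values of $\sigma=s_x+s_y$, the residual equation in $A$ has $O_k(1)$ solutions, and then $(x_1-x_2)^2=A-s_x^2$, $(y_1-y_2)^2=A-s_y^2$ recover $x_i,y_i$ up to sign. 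For $k=1$ the polynomial approach degenerates (since $\Phi_1(s,A)=s$ is independent of $A$); instead $s_x-s_y=h$ combined with $s_x^2-s_y^2=\eta^2-\delta^2$ (writing $\delta=x_1-x_2$, $\eta=y_1-y_2$) yields the factorisation $(\eta-\delta)(\eta+\delta)=h(2s_y+h)$, and summing the divisor bound $O((|h|B)^\eps)$ over the $O(B)$ values of $s_y$ produces $v_{1,2}(h)\ll |h|^\eps B^{1+\eps}$.

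The main obstacle is verifying that the residual polynomials $T_k$ (in (i)) and $G_k$ (in (ii)) are genuinely non-constant in the free variable ($s$ and $A$ respectively), since otherwise the final count would acquire an extra factor of $B$. For $T_k$ this is immediate from the binomial expansion above. For $G_k$ it requires a short inductive check on Newton's recursion $\Phi_j=s\Phi_{j-1}-((s^2-A)/2)\Phi_{j-2}$ that distinguishes the parity of $k$: when $k$ is odd the leading $A^{(k-1)/2}$-coefficient of $\Phi_k$ already depends on $s$, while for $k$ even the leading $A^{k/2}$-coefficient is constant in $s$ but the $A^{k/2-1}$-coefficient depends non-trivially on $s$, and in either case one extracts a non-zero $A^j$-coefficient with $j\ge 1$ surviving the subtraction.
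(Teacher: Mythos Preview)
Your proofs of (i) and (iii) are correct and follow the same idea as the paper, just in different coordinates. The paper keeps $x_1$ free and writes $d_1=y_1-x_1$, $d_2=y_2-x_1$, obtaining $(y_1-x_1)(y_2-x_1)\Psi_1=h$; you keep $s=x_1+x_2$ free and write $u=x_1-y_1$, $w=x_1+y_1-s=y_1-x_2$, obtaining $uw\,T_k=2^{k-2}h$. Since $(d_1,d_2)=(-u,-w)$ these are the same factorisation, and your observation that the leading $s$-term of $T_k$ is $k(k-1)s^{k-2}$ plays exactly the role of the paper's appeal to the second forward difference in checking $\Psi_1$ is non-constant.

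Your treatment of (ii) is genuinely different. The paper fixes $x_2$ (supplying the factor $B$), squares the degree-$k$ relation, and subtracts to get
\[
(y_1-x_1)(y_2-x_1)\Psi_2(y_1,y_2,x_1)=h(2x_2^k-h),
\]
after which the divisor bound on the right-hand side of size $\ll |h|B^k$ gives $O(|h|^\eps B^\eps)$ choices and a quadratic relation determines $x_1$; no structural information about $\Psi_2$ is needed. Your route via Newton's identities is more uniform with (i) and (iii): factor out $d=s_x-s_y$ from $\Phi_k(s_x,A)-\Phi_k(s_y,A)$, apply the divisor bound to $c_kh$ alone, range over $\sigma=s_x+s_y$, and solve in $A$. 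For $k\ge 3$ this even saves the $B^\eps$, giving $v_{k,2}(h)\ll |h|^\eps B$. The cost is the verification that $G_k$ has positive $A$-degree for every admissible pair $(s_x,s_y)$; your parity sketch is correct (carrying out the induction shows the leading $A$-coefficient of $G_k$ is the constant $k/2^{(k-1)/2}$ for odd $k$, and $j(j-1)(s_x+s_y)/2^{j-1}$ for even $k=2j\ge 4$, nonzero since $s_x,s_y>0$), but it does demand this extra computation, which the paper's argument sidesteps. One slip: $(x_1-x_2)^2=2A-s_x^2$, not $A-s_x^2$; this does not affect the count.
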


\begin{proof} When $m=1$ and $k>2$, the validity of equations (\ref{2.6}) and 
(\ref{2.7}) implies first that
\begin{equation}\label{4.1}
x_2=y_1+y_2-x_1,
\end{equation}
and hence that
\begin{equation}\label{4.2}
(y_1+y_2-x_1)^k-(y_1^k+y_2^k-x_1^k)=h.
\end{equation}
The polynomial on the left hand side here has factors $y_1-x_1$ and $y_2-x_1$, and hence 
there is a polynomial $\Psi_1\in \dbZ[s_1,s_2,s_3]$ of degree $k-2$ for which
\[
(y_1-x_1)(y_2-x_1)\Psi_1(y_1,y_2,x_1)=h.
\]
We therefore see that $y_1-x_1$, $y_2-x_1$ and $\Psi_1(y_1,y_2,x_1)$ are all divisors of 
the non-zero integer $h$. There are $O(|h|^\eps)$ such divisors, say $d_1=y_1-x_1$, 
$d_2=y_2-x_1$ and $d_3=\Psi_1(y_1,y_2,x_1)$, whence
\begin{equation}\label{4.3}
y_1=x_1+d_1,\quad y_2=x_1+d_2\quad \text{and}\quad 
\Psi_1(x_1+d_1,x_1+d_2,x_1)=d_3.
\end{equation}
An examination of (\ref{4.2}) reveals that
\[
(x_1+d_1+d_2)^k-(x_1+d_1)^k-(x_1+d_2)^k+x_1^k=d_1d_2\Psi_1(x_1+d_1,x_1+d_2,x_1),
\]
and so a consideration of the second forward difference polynomial associated with $x^k$ 
reveals that $\Psi_1(x_1+d_1,x_1+d_2,x_1)$ is non-constant as a polynomial in $x_1$. For 
each fixed one of the $O(|h|^\eps)$ possible choices for $d_1$, $d_2$, $d_3$, it therefore 
follows from the final equation in (\ref{4.3}) that there are $O(1)$ possible choices for 
$x_1$. From here, by back substituting first into (\ref{4.3}), and thence into (\ref{4.1}), we 
find that $x_1$, $x_2$, $y_1$, $y_2$ are all fixed. Thus indeed $v_{k,1}(h)\ll |h|^\eps$, 
and the proof of the lemma is complete in case (i).\par

In case (iii) we may proceed in like manner, though in this case we find that $\Psi_1=2$. 
We therefore have as many as $O(B)$ choices remaining available for $x_1$, and so we 
arrive at the weaker upper bound $v_{2,1}(h)\ll |h|^\eps B$.\par

Finally, we examine the situation with $m=2$. Notice first that when $x_1=x_2$ and 
$2x_1^k=h$, then equation (\ref{2.6}) simplifies to $y_1^k+y_2^k=0$, and this is 
impossible because $y_1,y_2\in \dbN$. It follows that either $2x_1^k\ne h$ or 
$2x_2^k\ne h$, and we may assume the latter by symmetry. We now substitute the 
equation
\begin{equation}\label{4.4}
x_2^2=y_1^2+y_2^2-x_1^2
\end{equation}
for (\ref{4.1}), and thus infer that in place of (\ref{4.2}) we have the equation
\begin{align*}
(y_1^2+y_2^2-x_1^2)^k-(y_1^k+y_2^k-x_1^k)^2&=(x_2^2)^k-(x_2^k-h)^2\\
&=h(2x_2^k-h).
\end{align*}
The polynomial on the left hand side here has factors $y_1-x_1$ and $y_2-x_1$, and hence 
there is a polynomial $\Psi_2\in \dbZ[s_1,s_2,s_3]$ of degree $2k-2$ for which
\begin{equation}\label{4.5}
(y_1-x_1)(y_2-x_1)\Psi_2(y_1,y_2,x_1)=h(2x_2^k-h).
\end{equation}

\par For each fixed choice of $x_2$ with $1\le x_2\le B$ in question, we may suppose that 
the right hand side of (\ref{4.5}) is a fixed non-zero integer $N$ with $N\ll |h|B^k$. The 
integers $y_1-x_1$, $y_2-x_1$ and $\Psi_2(y_1,y_2,x_1)$ are each divisors of $N$, and 
hence there are $O(|N|^\eps)$ such divisors, say
\begin{equation}\label{4.6}
d_1=y_1-x_1,\quad d_2=y_2-x_1\quad \text{and}\quad d_3=\Psi_2(y_1,y_2,x_1).
\end{equation}
We now find from (\ref{4.4}) that
\[
(x_1+d_1)^2+(x_1+d_2)^2-x_1^2=x_2^2,
\]
whence
\[
(x_1+d_1+d_2)^2=x_2^2+2d_1d_2.
\]
With $x_2$ already fixed, it follows that for each fixed one of the $O(|h|^\eps B^\eps)$ 
possible choices for $d_1$, $d_2$ and $d_3$, the choice for $x_1$ is fixed by this last 
equation. The variables $y_1$ and $y_2$ are then fixed via (\ref{4.6}), and we conclude 
that $v_{k,2}(h)\ll |h|^\eps B^{1+\eps}$. This completes the proof of part (ii), and hence 
also the lemma.
\end{proof}

The conclusion of Theorem \ref{theorem1.1} in case (ii) is now obtained in a 
straightforward manner by appealing to Hua's lemma.

\begin{lemma}\label{lemma4.2}
Suppose that $m\in \{1,2\}$, $k\not\in \{m,n\}$ and $n\ge 3$. Then one has
\[
N_{k,m,n}(B)-12B^5\ll B^{14/3}.
\]
\end{lemma}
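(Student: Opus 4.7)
The plan is to use the identity (\ref{2.12}) to reduce the problem to estimating the sum
\[
\Sigma=\sum_{1\le |h|\le 2B^k}u_{k,n}(h)v_{k,m}(h),
\]
and then to combine the pointwise estimates of Lemma \ref{lemma4.1} for $v_{k,m}(h)$ with Hua's lemma for the sixth moment of $f_n(\beta)=\sum_{1\le x\le B}e(\beta x^n)$. The target is $\Sigma\ll B^{14/3}$, after which (\ref{2.12}) delivers the stated conclusion.

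First, I would verify that every subcase of (ii) is covered by Lemma \ref{lemma4.1}. Since $m\in\{1,2\}$ and $k\ne m$, the three parts of that lemma collectively give, for all $h\ne 0$, the uniform bound $v_{k,m}(h)\ll |h|^\eps B^{1+\eps}$ (the cases $(m,k)=(1,2)$ and $m=2$ saturate this, while the subcase $m=1$, $k\ge 3$ is considerably sharper). Using $|h|\le 2B^k$ to absorb the $|h|^\eps$ into a power of $B$, this becomes
\[
\sup_{0<|h|\le 2B^k}v_{k,m}(h)\ll B^{1+O(\eps)}.
\]

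Next, pulling this supremum out of the sum, extending the range to all $h\in\dbZ$ by non-negativity, and recognising the resulting total as a moment integral via orthogonality, I would obtain
\[
\Sigma \le \Bigl(\sup_{h\ne 0}v_{k,m}(h)\Bigr)\sum_{h\in\dbZ}u_{k,n}(h)\ll B^{1+O(\eps)}\int_0^1|f_n(\beta)|^6\,\d\beta ,
\]
the rightmost equality following because $\sum_{h\in\dbZ}u_{k,n}(h)$ counts the solutions of (\ref{2.5}) alone. Since $n\ge 3$, Hua's lemma yields both $\int_0^1|f_n|^4\,\d\beta \ll B^{2+\eps}$ and $\int_0^1|f_n|^8\,\d\beta \ll B^{5+\eps}$, and Cauchy--Schwarz then supplies
\[
\int_0^1|f_n(\beta)|^6\,\d\beta \le \Bigl(\int_0^1|f_n|^4\,\d\beta\Bigr)^{1/2}\Bigl(\int_0^1|f_n|^8\,\d\beta\Bigr)^{1/2}\ll B^{7/2+\eps}.
\]
Assembling the pieces gives $\Sigma\ll B^{9/2+O(\eps)}$, which is $\ll B^{14/3}$ as soon as $\eps$ is chosen small enough.

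No substantive obstacle arises: Lemma \ref{lemma4.1} absorbs the work associated with the trailing pair $(x_4,x_5)$, the hypothesis $n\ge 3$ unlocks Hua's lemma for the leading triple, and the exponents align with room to spare, since $1+7/2=9/2$ is comfortably below $14/3$ and leaves $14/3-9/2=1/6$ of slack to absorb the $O(\eps)$ losses inherited from Hua's lemma and Lemma \ref{lemma4.1}.
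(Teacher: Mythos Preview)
Your proof is correct and follows essentially the same route as the paper: bound $v_{k,m}(h)$ uniformly by $B^{1+\eps}$ via Lemma \ref{lemma4.1}, pull this out of the sum, recognise $\sum_h u_{k,n}(h)$ as the sixth moment of the Weyl sum $f_n$, and bound the latter by $B^{7/2+\eps}$ using Hua's lemma via Cauchy--Schwarz between the fourth and eighth moments. The paper's argument is identical in structure and in the exponents obtained.
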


\begin{proof} It follows from Lemma \ref{lemma4.1} that
\[
\max_{1\le |h|\le 2B^k}v_{k,m}(h)\ll B^{1+\eps}.
\]
On substituting this estimate into (\ref{2.12}), we infer that
\begin{equation}\label{4.7}
N_{k,m,n}(B)-12B^5\ll B^{14/3}+B^{1+\eps}\sum_{h\in \dbZ}u_{k,n}(h).
\end{equation}
The last sum here counts the number of integral solutions of the equation
\[
\sum_{i=1}^3(x_i^n-y_i^n)=0,
\]
with $1\le x_i,y_i\le B$ $(1\le i\le 3)$. By orthogonality, an application of Schwarz's 
inequality, and the invocation of Hua's lemma (see \cite[Lemma 2.5]{Vau1997}), we obtain 
the standard estimate
\[
\int_0^1 \Bigl| \sum_{1\le x\le B}e(\alp x^n)\Bigr|^6\d\alp \ll B^{7/2+\eps}
\]
for this quantity. On substituting this upper bound into (\ref{4.7}), we conclude that
\[
N_{k,m,n}(B)-12B^5\ll B^{14/3}+B^{1+\eps}\cdot B^{7/2+\eps}\ll B^{14/3},
\]
and the proof of the lemma is complete.
\end{proof}

\section{A cheap complification argument when $n=2$} Our purpose in this section is to 
handle triples of type (iii), wherein we may suppose that $m\in \{1,2\}$, $n=2$ and 
$k\ge 3$. This we achieve through a complification argument the prosecution of which 
requires several auxiliary mean value estimates. We now supply these estimates.

\begin{lemma}\label{lemma5.1} Suppose that $m\in \{1,2\}$ and $k\ge 3$. Then one has 
\[
\sum_{1\le |h|\le 2B^k} v_{k,m}(h)^2\ll B^{3+\eps}.
\]
\end{lemma}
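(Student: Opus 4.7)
The plan is to exploit the divisor-type bounds for $v_{k,m}(h)$ established in Lemma \ref{lemma4.1} together with a crude first-moment estimate. Specifically, I would start from the elementary inequality
\[
\sum_{1\le |h|\le 2B^k}v_{k,m}(h)^2\le \Bigl(\sup_{1\le |h|\le 2B^k}v_{k,m}(h)\Bigr)\sum_{h\in \dbZ}v_{k,m}(h).
\]
The point is that each factor admits an essentially sharp estimate, and their product lands exactly at $B^{3+\eps}$. The numerology is borderline, so the real content lies in having genuinely sub-$B^{3/2}$ bounds on individual $v_{k,m}(h)$.

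For the supremum factor I would invoke Lemma \ref{lemma4.1}. Since $k\ge 3$, both hypotheses $k>2$ (relevant when $m=1$) and $k\ne 2$ (relevant when $m=2$) are satisfied. Using $|h|\le 2B^k$, one has $|h|^\eps\ll B^\eps$ after harmlessly relabelling $\eps$, so Lemma \ref{lemma4.1}(i) yields $v_{k,1}(h)\ll B^\eps$ while Lemma \ref{lemma4.1}(ii) yields $v_{k,2}(h)\ll B^{1+\eps}$. In both cases we may write
\[
\sup_{1\le |h|\le 2B^k}v_{k,m}(h)\ll B^{m-1+\eps}.
\]

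For the total sum, I would observe that $\sum_{h\in \dbZ}v_{k,m}(h)$ counts the number of solutions of $x_1^m+x_2^m=y_1^m+y_2^m$ alone, with $1\le x_i,y_i\le B$ $(i=1,2)$, since the condition on $h$ disappears on summation. When $m=1$, this equation has $O(B^3)$ solutions, as the variable $y_2$ is determined by the remaining three. When $m=2$, it is $\sum_{n\le 2B^2}r(n)^2$, where $r(n)$ denotes the number of representations of $n$ as a sum of two positive squares not exceeding $B$; the standard divisor estimate $r(n)\ll n^\eps$ then gives $\sum_n r(n)^2\ll B^{2+\eps}$. In either case,
\[
\sum_{h\in \dbZ}v_{k,m}(h)\ll B^{4-m+\eps}.
\]
Multiplying the two estimates yields $\sum_{1\le |h|\le 2B^k}v_{k,m}(h)^2\ll B^{m-1+\eps}\cdot B^{4-m+\eps}=B^{3+\eps}$, as desired. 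There is no real obstacle here; Lemma \ref{lemma4.1} has already done the heavy lifting, and what remains is a one-line interpolation between the pointwise and first-moment bounds.
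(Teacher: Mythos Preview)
Your proof is correct and follows essentially the same route as the paper: bound one factor of $v_{k,m}(h)$ pointwise via Lemma \ref{lemma4.1} to get $B^{m-1+\eps}$, and bound the remaining first moment $\sum_h v_{k,m}(h)$ by the count of solutions to $x_1^m+x_2^m=y_1^m+y_2^m$, which is $O(B^{4-m+\eps})$. The only cosmetic difference is that the paper cites Hua's lemma for the $m=2$ count whereas you use the divisor bound on $r(n)$; both give the same $B^{2+\eps}$.
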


\begin{proof} By Lemma \ref{lemma4.1}, one has
\begin{equation}\label{5.1}
\sum_{1\le |h|\le 2B^k}v_{k,m}(h)^2\ll B^{m-1+\eps}\sum_{h\in \dbZ}v_{k,m}(h).
\end{equation}
On recalling (\ref{2.6}) and (\ref{2.7}), we see that the sum on the right hand side here 
is bounded above by the number of solutions of the equation
\[
x_1^m+x_2^m=y_1^m+y_2^m,
\]
with $1\le x_i,y_i\le B$. When $m=1$ this is plainly $O(B^3)$, whilst for $m=2$ it follows 
from Hua's lemma that the number of solutions is $O(B^{2+\eps})$ (see 
\cite[Lemma 2.5]{Vau1997}). Thus, in either case, the number of solutions is 
$O(B^{4-m+\eps})$, and we conclude from (\ref{5.1}) that
\[
\sum_{1\le |h|\le 2B^k}v_{k,m}(h)^2\ll B^{m-1+\eps}\cdot B^{4-m+\eps}\ll 
B^{3+2\eps}.
\]
This completes the proof of the lemma.
\end{proof} 

Next we record an upper bound available from recent work associated with Vinogradov's 
mean value theorem.

\begin{lemma}\label{lemma5.2} Suppose that $k\ge 3$. Then one has
\[
\int_0^1\int_0^1 |f_{k,2}(\alp,\bet)|^{12}\d\alp \d\bet \ll B^{7+\eps}.
\]
\end{lemma}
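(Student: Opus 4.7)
The plan is to reinterpret the integral as a Diophantine counting problem, then apply an efficient Cauchy--Schwarz manoeuvre that inserts an auxiliary linear variable, reducing matters to a Vinogradov-type mean value estimate. By orthogonality, the quantity to be bounded is the number $N$ of integral solutions $(\bfx,\bfy)\in[1,B]^{12}$ of the system
\[
\sum_{i=1}^6 x_i^k=\sum_{i=1}^6 y_i^k,\qquad \sum_{i=1}^6 x_i^2=\sum_{i=1}^6 y_i^2.
\]
For $\bfx\in[1,B]^6$, record the triple $(a,b,c)=(\sum x_i,\sum x_i^2,\sum x_i^k)$, and let $r(a,b,c)$ count the $\bfx$ realising this triple. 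Grouping solutions by the values $(b,c)$ that the two equations force to coincide yields $N=\sum_{b,c}(\sum_a r(a,b,c))^2$, and since $a$ ranges over an interval of length $\ll B$, Cauchy--Schwarz gives
\[
N\le 6B\sum_{a,b,c}r(a,b,c)^2 = 6B\cdot P_k,
\]
where $P_k$ counts $12$-tuples $(\bfx,\bfy)\in[1,B]^{12}$ satisfying $\sum x_i^j=\sum y_i^j$ for every $j\in\{1,2,k\}$.

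When $k=3$, $P_3$ is precisely the Vinogradov mean value for the translation-dilation invariant system of degrees $1,2,3$ in twelve variables. The resolution of the main conjecture in Vinogradov's mean value theorem, via efficient congruencing (Wooley) and $\ell^2$-decoupling (Bourgain--Demeter--Guth), furnishes $P_3\ll B^{6+\eps}$, whence $N\ll B^{7+\eps}$, as required.

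When $k\ge 4$, the ancillary system $\{1,2,k\}$ is no longer translation-dilation invariant, so $P_k$ eludes a direct attack through Vinogradov's theorem. In this range, however, one bypasses the Cauchy--Schwarz reduction entirely and bounds the original integral directly: the $\ell^2$-decoupling estimate for the non-degenerate planar curve $\{(x^2,x^k):x\in[1,B]\}$ (equivalently, the multidimensional mean value theorem of Parsell--Prendiville--Wooley for Weyl sums associated with a translation-dilation invariant subsystem) yields
\[
\int_0^1\int_0^1|f_{k,2}(\alp,\bet)|^{12}\,d\alp\,d\bet \ll B^{\max(6,10-k)+\eps}\ll B^{6+\eps},
\]
which is comfortably stronger than required. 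The principal obstacle is therefore the case $k=3$, where the target exponent is essentially sharp (up to the $\eps$-loss) and the argument rests on the full strength of the resolved Vinogradov main conjecture; for $k\ge 4$ there is slack, and alternative routes (such as interpolation between Hua's lemma and the paucity estimate (2.11)) would also suffice.
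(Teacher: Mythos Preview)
Your reduction $N\le 6B\cdot P_k$ via Cauchy--Schwarz is essentially the paper's argument: the paper writes
\[
\int_0^1\!\!\int_0^1|f_{k,2}|^{12}\d\alp\d\bet=\sum_{|l|\le 6B}\int_{[0,1)^3}|c(\bfalp)|^{12}e(-l\gam)\d\bfalp\ll B\int_{[0,1)^3}|c(\bfalp)|^{12}\d\bfalp,
\]
with $c(\bfalp)=\sum_x e(\alp x^k+\bet x^2+\gam x)$, which is the positivity/triangle-inequality counterpart of your Cauchy--Schwarz step and yields exactly your $P_k$. For $k=3$ both arguments then invoke the cubic case of Vinogradov's mean value theorem, so here your proof and the paper's coincide.

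The gap is in your handling of $k\ge 4$. You abandon the $P_k$ route on the grounds that the system $\{1,2,k\}$ is not translation--dilation invariant, but this is precisely what the paper's reference \cite[Corollary~1.2]{Woo2019} (nested efficient congruencing) is designed to cover: it delivers $P_k\ll B^{6+\eps}$ uniformly for every $k\ge 3$, so the same one-line argument works throughout. Your proposed substitute does not stand up. The Parsell--Prendiville--Wooley machinery applies to translation--dilation invariant systems, and $\{2,k\}$ is merely dilation-invariant, so the parenthetical ``equivalently'' is incorrect. As for $\ell^2$-decoupling for the non-degenerate planar curve $t\mapsto(t^2,t^k)$, this yields sharp estimates only up to the critical exponent $p=6$; beyond that one pays the usual supercritical loss, and there is no direct route from planar decoupling to the claimed twelfth-moment bound $\ll B^{6+\eps}$. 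Thus for $k\ge 4$ your argument is incomplete, whereas simply retaining the $P_k$ bound and citing \cite[Corollary~1.2]{Woo2019} (as the paper does) closes the gap immediately.
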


\begin{proof} This is a special case of \cite[Theorem 14.1]{Woo2019}, though the proof 
is simple and transparent enough to provide here in full. Write
\[
c(\bfalp)=\sum_{1\le x\le B}e(\alp x^k+\bet x^2+\gam x).
\]
Then we deduce via the triangle inequality and orthogonality that
\begin{align*}
\int_0^1\int_0^1|f_{k,2}(\alp,\bet)|^{12}\d\alp \d\bet &=\sum_{|l|\le 6B}\int_{[0,1)^3}
|c(\bfalp)|^{12}e(-l\gam )\d\bfalp \\
&\ll B\int_{[0,1)^3}|c(\bfalp)|^{12}\d\bfalp .
\end{align*}
By \cite[Corollary 1.2]{Woo2019}, the last integral is $O(B^{6+\eps})$, and so the desired 
conclusion follows at once.
\end{proof}

Now we come to the proof of Theorem \ref{theorem1.1} in the case (iii).

\begin{lemma}\label{lemma5.3}
Suppose that $m\in \{1,2\}$ and $k\ge 3$. Then one has
\[
N_{k,m,2}(B)-12B^5\ll B^{14/3+\eps}.
\]
\end{lemma}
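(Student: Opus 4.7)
The starting point is the reduction~\eqref{2.12}, which shows that the lemma will follow once the sum
\[
S:=\sum_{1\le|h|\le 2B^k}u_{k,2}(h)v_{k,m}(h)
\]
is shown to satisfy $S\ll B^{14/3+\eps}$. The plan is to apply the complification promised in the introduction via Cauchy's inequality, estimating the two mean squares that arise using Lemmas~\ref{lemma5.1} and~\ref{lemma5.2}.

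Explicitly, Cauchy--Schwarz delivers
\[
S\le\Big(\sum_h u_{k,2}(h)^2\Big)^{1/2}\Big(\sum_{1\le|h|\le 2B^k}v_{k,m}(h)^2\Big)^{1/2}.
\]
The second factor is bounded by $B^{3+\eps}$ directly from Lemma~\ref{lemma5.1}. For the first factor, I would view $u_{k,2}(h)$ as the $h$-th Fourier coefficient of the nonnegative trigonometric polynomial $F(\alp):=\int_0^1|f_{k,2}(\alp,\bet)|^6\d\bet$, so that Parseval's identity gives $\sum_h u_{k,2}(h)^2=\int_0^1 F(\alp)^2\d\alp$. By orthogonality this mean square counts the solutions of the twelve-variable system with three equations of respective degrees $k,2,2$ featured in the introduction. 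A further Cauchy--Schwarz step in $\bet$ then bounds $F(\alp)^2$ by $\int_0^1|f_{k,2}(\alp,\bet)|^{12}\d\bet$, so that Lemma~\ref{lemma5.2} supplies $\sum_h u_{k,2}(h)^2\ll B^{7+\eps}$.

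The subcase $m=1$ is in fact easier and bypasses the complification altogether: the pointwise bound $v_{k,1}(h)\ll B^\eps$ from Lemma~\ref{lemma4.1}(i) together with the sixth-moment estimate $\sum_h u_{k,2}(h)\ll B^{4+\eps}$ (coming from Hua's lemma applied to $\sum_{x\le B}e(\bet x^2)$) gives $S\ll B^{4+\eps}$ immediately. The genuinely delicate subcase is $m=2$, in which the two mean-square bounds must be combined tightly in order to reach the target $B^{14/3+\eps}$. The main step, and the potential obstacle, is arranging this combination carefully: one expects to exploit that the dominant contribution to $\sum_h u_{k,2}(h)^2$ comes from diagonal $12$-tuples (of order $B^6$), allowing a refinement beyond the bare Jensen-plus-Lemma~\ref{lemma5.2} estimate of $B^{7+\eps}$ that brings the Cauchy--Schwarz product within the stated $B^{14/3+\eps}$.
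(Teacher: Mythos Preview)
Your framework coincides with the paper's: apply Cauchy--Schwarz to $S$, invoke Lemma~\ref{lemma5.1} for the $v$-sum, and then estimate $\sum_h u_{k,2}(h)^2$. The subcase $m=1$ is fine as you say. The problem is that for $m=2$ your argument stops short of a proof. Your bound $F(\alp)^2\le \int_0^1|f_{k,2}(\alp,\bet)|^{12}\d\bet$ together with Lemma~\ref{lemma5.2} gives only $\sum_h u_{k,2}(h)^2\ll B^{7+\eps}$, and hence $S\ll (B^{3+\eps}\cdot B^{7+\eps})^{1/2}=B^{5+\eps}$, which saves nothing. You acknowledge this and assert that ``one expects'' a refinement to roughly $B^6$, but you give no mechanism for producing it; the claim that the diagonal dominates is precisely what has to be proved, and direct paucity arguments for the twelve-variable system with degrees $k,2,2$ are not available.

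The missing idea is an \emph{asymmetric} decomposition of the integral representation of $\sum_h u_{k,2}(h)^2$. Writing this sum as $\int_{[0,1)^3}|f_{k,2}(\alp,\bet)f_{k,2}(\alp,\gam)|^6\d\bfalp$, one applies Schwarz in the form $|f(\bet)f(\gam)|^6\le |f(\bet)^8 f(\gam)^4|+|f(\bet)^4 f(\gam)^8|$ rather than collapsing to a single $12$th moment. The $\gam$-integral of $|f_{k,2}(\alp,\gam)|^4$ is at most the number of solutions of $x_1^2+x_2^2=y_1^2+y_2^2$ with $x_i,y_i\le B$, so is $\ll B^{2+\eps}$ uniformly in $\alp$. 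For the remaining $8$th moment $I_8=\int|f_{k,2}|^8$ one interpolates by H\"older between $I_6$ and $I_{12}$; crucially $I_6=u_{k,2}(0)\ll B^3$ by the paucity estimate~\eqref{2.11}, and $I_{12}\ll B^{7+\eps}$ by Lemma~\ref{lemma5.2}, so $I_8\le I_6^{2/3}I_{12}^{1/3}\ll B^{13/3+\eps}$. This yields $\sum_h u_{k,2}(h)^2\ll B^{19/3+\eps}$, and then $S\ll (B^{3+\eps}\cdot B^{19/3+\eps})^{1/2}=B^{14/3+\eps}$ as required. The essential input you are missing is the use of the six-variable paucity bound~\eqref{2.11} as one endpoint of this interpolation.
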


\begin{proof} An application of Cauchy's inequality in combination with Lemma 
\ref{lemma5.1} yields the bound
\begin{align}
\sum_{1\le |h|\le 2B^k}u_{k,2}(h)v_{k,m}(h)&\le \Bigl( \sum_{1\le |h|\le 2B^k}
v_{k,m}(h)^2\Bigr)^{1/2} \Bigl( \sum_{h\in \dbZ}u_{k,2}(h)^2\Bigr)^{1/2}\notag \\
&\ll (B^{3+\eps})^{1/2}\Bigl( \sum_{h\in \dbZ}u_{k,2}(h)^2\Bigr)^{1/2} .\label{5.2}
\end{align}
On recalling (\ref{2.4}) and (\ref{2.5}), the sum on the right hand side here may be 
reinterpreted in terms of a Diophantine equation. Thus, it follows via orthogonality, 
Schwarz's inequality and symmetry that
\begin{align}
\sum_{h\in \dbZ}u_{k,2}(h)^2&=\int_{[0,1)^3}|f_{k,2}(\alp,\bet)f_{k,2}(\alp,\gam)|^6
\d\bfalp \notag \\
&\le \int_{[0,1)^3}|f_{k,2}(\alp,\bet)^8f_{k,2}(\alp,\gam)^4|\d\bfalp .\label{5.3}
\end{align}

\par Observe that by orthogonality in league with the triangle inequality,
\[
\sup_{\alp \in \dbR}\int_0^1|f_{k,2}(\alp,\gam)|^4\d\gam \le \int_0^1|f_{k,2}(0,\gam)|^4
\d\gam \ll B^{2+\eps},
\]
wherein we interpreted the second integral as the number of solutions of the equation 
$x_1^2+x_2^2=y_1^2+y_2^2$ with $1\le x_i,y_i\le B$, and applied Hua's lemma. 
Returning to (\ref{5.3}) and applying H\"older's inequality, therefore, we find that
\begin{align*}
\sum_{h\in \dbZ}u_{k,2}(h)^2&\ll B^{2+\eps}\int_0^1\int_0^1|f_{k,2}(\alp,\bet)|^8
\d\alp \d\bet \\
&\ll B^{2+\eps}I_6^{2/3}I_{12}^{1/3},
\end{align*}
where
\[
I_t=\int_0^1\int_0^1|f_{k,2}(\alp,\bet)|^t\d\alp \d\bet .
\]
By orthogonality, we see from (\ref{2.11}) that
\[
I_6=u_{k,2}(0)\ll B^3,
\]
whilst Lemma \ref{lemma5.2} delivers the bound $I_{12}\ll B^{7+\eps}$. Thus we deduce 
that
\[
\sum_{h\in \dbZ}u_{k,2}(h)^2\ll B^{2+\eps}(B^3)^{2/3}(B^{7+\eps})^{1/3}
\ll B^{19/3+2\eps}.
\]

\par Finally, by substituting the last bound into (\ref{5.2}), we arrive at the estimate
\[
\sum_{1\le |h|\le 2B^k}u_{k,2}(h)v_{k,m}(h)\ll (B^{3+\eps})^{1/2}
(B^{19/3+\eps})^{1/2}\ll B^{14/3+\eps}.
\]
This, when substituted into (\ref{2.12}), delivers the relation
\[
N_{k,m,2}(B)-12B^5\ll B^{14/3+\eps},
\]
and this completes the proof of the lemma.
\end{proof}

\section{A cheap complification argument when $n=1$ and $k\ge 4$} The analysis of 
triples of type (iv) is similar to that applied in the previous section for triples of type (iii). We 
now suppose that $n=1$ and $k\ge 4$, however, which prevents appeal to the relatively 
powerful mean value estimates for quadratic Weyl sums available when $n=2$. We again 
begin with an auxiliary mean value estimate.

\begin{lemma}\label{lemma6.1} Suppose that $k\ge 4$. Then one has
\[
\int_{[0,1)^3}|f_{k,1}(\alp,\bet)^6f_{k,1}(\alp,\gam)^{14}|\d\bfalp \ll B^{14+\eps}.
\]
\end{lemma}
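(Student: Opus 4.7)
The plan is to separate the $\bet$ and $\gam$ integrations and write
\[
I = \int_0^1 G(\alp) K(\alp)\,\d\alp,\qquad G(\alp)=\int_0^1 |f_{k,1}(\alp,\bet)|^6\,\d\bet,\qquad K(\alp)=\int_0^1 |f_{k,1}(\alp,\gam)|^{14}\,\d\gam.
\]
Both $G$ and $K$ are non-negative real functions of $\alp$, so the crude inequality
\[
I \le \bigl(\sup_\alp G(\alp)\bigr)\int_0^1 K(\alp)\,\d\alp
\]
will suffice, provided I can establish $\sup_\alp G(\alp)\ll B^5$ and $\int_0^1 K(\alp)\,\d\alp \ll B^{9+\eps}$.

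The sup bound on $G$ should be immediate. Orthogonality in $\bet$ yields
\[
G(\alp) = \sum_{\substack{\bfx,\bfy\in[1,B]^3\\ x_1+x_2+x_3=y_1+y_2+y_3}} e\bigl(\alp\textstyle\sum_{i=1}^3(x_i^k-y_i^k)\bigr),
\]
and the triangle inequality majorises this by $\#\{(\bfx,\bfy)\in[1,B]^6: \sum x_i=\sum y_i\}$. The linear constraint determines one variable from the remaining five, giving $G(\alp)\ll B^5$ uniformly in $\alp$.

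For $\int_0^1 K(\alp)\,\d\alp = \int_{[0,1)^2}|f_{k,1}(\alp,\gam)|^{14}\,\d\alp\,\d\gam$, I would adopt the augmentation device from the proof of Lemma~\ref{lemma5.2}. Writing $c(\alp,\bet,\gam)=\sum_{1\le x\le B}e(\alp x^k+\bet x^2+\gam x)$, one has $f_{k,1}(\alp,\gam) = c(\alp,0,\gam)$; Fourier expansion of $|c|^{14}$ in the variable $\bet$, specialised to $\bet=0$, together with the triangle inequality gives
\[
\int_{[0,1)^2}|f_{k,1}|^{14}\,\d\alp\,\d\gam = \sum_{|m|\le 7B^2}\int_{[0,1)^3}|c|^{14}\,e(-\bet m)\,\d\bfalp \ll B^2\int_{[0,1)^3}|c|^{14}\,\d\bfalp,
\]
precisely in the manner of Lemma~\ref{lemma5.2}. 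It remains to show $\int_{[0,1)^3}|c|^{14}\,\d\bfalp \ll B^{7+\eps}$ for $k\ge 4$. This counts $14$-tuples satisfying the partial Vinogradov system of degrees $\{1,2,k\}$ at $s=7$ pairs, whose critical exponent is $s_{\rm crit}=1+2+k=k+3$; since $s=7\le s_{\rm crit}$ for every $k\ge 4$, the desired bound $\ll B^{7+\eps}$ follows from the same body of nested efficient congruencing supplying \cite[Corollary 1.2]{Woo2019} (which is the $12$th-moment bound at $s=6$).

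Assembling these ingredients,
\[
I \le \bigl(\sup_\alp G(\alp)\bigr)\int_0^1 K(\alp)\,\d\alp \ll B^5 \cdot B^2 \cdot B^{7+\eps} = B^{14+\eps},
\]
which is the asserted bound. The principal obstacle will be securing the mean-value estimate for $|c|^{14}$ at the critical exponent $s=k+3=7$ in the tight case $k=4$; once that is in hand (or cited from \cite{Woo2019}), the rest is purely Fourier orthogonality and the triangle inequality.
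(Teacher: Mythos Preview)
Your argument is correct and reaches the target $B^{14+\eps}$, but it follows a genuinely different route from the paper. The paper redistributes exponents via the elementary inequality $|a|^6|b|^{14}\ll |a|^{18}|b|^2+|a|^2|b|^{18}$, integrates the factor $|f_{k,1}(\alp,\gam)|^2$ over $\gam$ to gain a single factor of $B$, and then invokes the Hua-type estimate of \cite[Lemma~5]{BR2015},
\[
\int_0^1\int_0^1 |f_{k,1}(\alp,\bet)|^{18}\,\d\alp\,\d\bet \ll B^{13+\eps}\qquad (k\ge 4),
\]
to finish. Your approach instead takes the crude factorisation $I\le (\sup_\alp G)\int K$, bounds $\sup_\alp G\ll B^5$ by the trivial count, augments the $14$th moment by an extra quadratic phase, and appeals to the main conjecture for the incomplete monomial system $\{1,2,k\}$ at $s=7$ from \cite{Woo2019}. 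The trade-off is clear: the paper's route rests on older and lighter technology (Hua iteration, \cite{BR2015}), whereas yours is structurally simpler but imports the full strength of nested efficient congruencing; note in particular that for $k=4$ you are invoking the sharp bound at the critical exponent $s=k+3=7$, which is the deepest point of that theory. Your citation is sound---the required estimate $\int_{[0,1)^3}|c|^{14}\,\d\bfalp\ll B^{7+\eps}$ for $k\ge 4$ is indeed covered by the same results in \cite{Woo2019} that supply the $12$th-moment bound used in Lemma~\ref{lemma5.2}---but it would be worth pointing explicitly to the general statement (e.g.\ Theorem~14.1 or the corollary covering arbitrary monomial exponent sets) rather than leaning on ``the same body of work''.
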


\begin{proof} Write
\[
F(\bfalp)=|f_{k,1}(\alp,\bet)^6f_{k,1}(\alp,\gam)^{14}|.
\]
Then by applying the elementary inequality
\[
|z_1\ldots z_r|\le |z_1|^r+\ldots +|z_r|^r,
\]
we see that
\[
F(\bfalp)\ll |f_{k,1}(\alp,\bet)^{18}f_{k,1}(\alp,\gam)^2|+
|f_{k,1}(\alp,\bet)^2f_{k,1}(\alp,\gam)^{18}| .
\]
Thus, by symmetry and orthogonality, we find that
\begin{align*}
\int_{[0,1)^3}F(\bfalp)\d\bfalp &\ll \int_0^1\int_0^1|f_{k,1}(\alp,\bet)|^{18}
\int_0^1|f_{k,1}(\alp,\gam)|^2\d\gam \d\bet \d\alp \\
&\le B\int_0^1\int_0^1 |f_{k,1}(\alp,\bet)|^{18}\d\alp \d\bet .
\end{align*}
The last integral is the subject of \cite[Lemma 5]{BR2015}, which shows that
\[
\int_0^1\int_0^1|f_{k,1}(\alp,\bet)|^{2^j+2}\d\alp \d\bet \ll B^{2^j-j+1+\eps}\quad 
(2\le j\le k).
\]
Thus, by applying this estimate with $j=4$, we deduce that
\[
\int_{[0,1)^3}|f_{k,1}(\alp,\bet)^6f_{k,1}(\alp,\gam)^{14}|\d\bfalp \ll B\cdot B^{13+\eps}
=B^{14+\eps}.
\]
This completes the proof of the lemma.
\end{proof}

We may now tackle the main conclusion of this section.

\begin{lemma}\label{lemma6.2}
Suppose that $m\in \{1,2\}$ and $k\ge 4$. Then one has
\[
N_{k,m,1}(B)-12B^5\ll B^{49/10+\eps}.
\]
\end{lemma}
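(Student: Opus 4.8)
The plan is to mirror the strategy of \S5, using Cauchy's inequality to separate the trailing block from the leading block, and then estimating each resulting mean value. Starting from (\ref{2.12}), I would apply Cauchy's inequality to the sum $\sum_{1\le|h|\le 2B^k}u_{k,1}(h)v_{k,m}(h)$, splitting it as
\[
\sum_{1\le |h|\le 2B^k}u_{k,1}(h)v_{k,m}(h)\le \Bigl(\sum_{1\le|h|\le 2B^k}v_{k,m}(h)^2\Bigr)^{1/2}\Bigl(\sum_{h\in\dbZ}u_{k,1}(h)^2\Bigr)^{1/2}.
\]
Lemma \ref{lemma5.1} already supplies $\sum_{1\le|h|\le 2B^k}v_{k,m}(h)^2\ll B^{3+\eps}$ when $m\in\{1,2\}$ and $k\ge 3$, so the work lies in bounding $\sum_{h\in\dbZ}u_{k,1}(h)^2$.

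For the latter sum, I would reinterpret it via orthogonality as the mean value $\int_{[0,1)^3}|f_{k,1}(\alp,\bet)f_{k,1}(\alp,\gam)|^6\d\bfalp$, where $f_{k,1}(\alp,\bet)$ and $f_{k,1}(\alp,\gam)$ share the degree-$k$ variable $\alp$. As in (\ref{5.3}), symmetry and Schwarz's inequality bound this by $\int_{[0,1)^3}|f_{k,1}(\alp,\bet)^8f_{k,1}(\alp,\gam)^4|\d\bfalp$, but the exponent $14$ in Lemma \ref{lemma6.1} suggests instead splitting the moments more aggressively: I would use H\"older to interpolate between $\int|f_{k,1}(\alp,\bet)f_{k,1}(\alp,\gam)|^6\d\bfalp$ and the mean value $\int_{[0,1)^3}|f_{k,1}(\alp,\bet)^6f_{k,1}(\alp,\gam)^{14}|\d\bfalp\ll B^{14+\eps}$ from Lemma \ref{lemma6.1}, using also the trivial bound $\int_{[0,1)^3}|f_{k,1}(\alp,\bet)^6f_{k,1}(\alp,\gam)^2|\d\bfalp\ll B^{6+\eps}$ (which follows from $\sup_\alp\int_0^1|f_{k,1}(\alp,\gam)|^2\d\gam\ll B$ together with $u_{k,1}(0)\ll B^3$ — here one should note that $u_{k,1}(0)\ll B^{3+\eps}$ holds for $k\ge 4$ by the paucity input (\ref{2.11}), valid in case (iv)). Writing $J_t=\int_{[0,1)^3}|f_{k,1}(\alp,\bet)^6f_{k,1}(\alp,\gam)^t|\d\bfalp$, one has $J_2\ll B^{6+\eps}$ and $J_{14}\ll B^{14+\eps}$; since $6$ lies between $2$ and $14$, H\"older gives $J_6\le J_2^{2/3}J_{14}^{1/3}\ll (B^{6+\eps})^{2/3}(B^{14+\eps})^{1/3}=B^{26/3+\eps}$. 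Hence $\sum_{h\in\dbZ}u_{k,1}(h)^2\ll B^{26/3+\eps}$.

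Combining these,
\[
\sum_{1\le|h|\le 2B^k}u_{k,1}(h)v_{k,m}(h)\ll (B^{3+\eps})^{1/2}(B^{26/3+\eps})^{1/2}=B^{35/6+\eps},
\]
which is far too weak — it exceeds $B^5$. This tells me that a cruder interpolation is insufficient, and one must retain more of the $v$-structure. The correct move is to \emph{not} apply Cauchy so as to isolate a clean sixth moment, but rather to keep one copy of $v_{k,m}(h)$ inside a longer H\"older chain, or — matching the section title "cheap complification" — to perform the complification directly on $N_{k,m,1}(B)$ before extracting $v$. Concretely, I would return to (\ref{2.1}) and apply Cauchy's inequality to split $|f_{k,1}(\alp,\bet)^6f_{k,m}(\alp,\gam)^4|$, obtaining a bound in terms of $\int_{[0,1)^3}|f_{k,1}(\alp,\bet)^6 f_{k,1}(\alp,\bet')^6|\,\cdots$, i.e. a $12$-variable system of degrees $k,1,1$ exactly as advertised in the introduction; then the diagonal contribution to that complified system is $O(B^5\cdot\text{something})$ and the off-diagonal part is controlled by Lemma \ref{lemma6.1} together with Lemma \ref{lemma5.1}. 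The key numerical balance is arranged so that, after dividing out the $v$-count $\sum v_{k,m}(h)^2\ll B^{3+\eps}$ and using $J_{14}\ll B^{14+\eps}$, a weighted H\"older split with weights tuned to produce the exponent $49/10$, one reaches
\[
\sum_{1\le|h|\le 2B^k}u_{k,1}(h)v_{k,m}(h)\ll B^{49/10+\eps}.
\]
Substituting into (\ref{2.12}) then yields $N_{k,m,1}(B)-12B^5\ll B^{49/10+\eps}$.

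The main obstacle is getting the exponent arithmetic to land below $5$: a single application of Cauchy followed by a two-term H\"older interpolation between the trivial moment and Lemma \ref{lemma6.1} is not enough, so one must either feed in the second-moment bound for $v_{k,m}$ from Lemma \ref{lemma5.1} inside a three- (or more) term H\"older chain, or perform the complification on the full mean value (\ref{2.1}) rather than on the decomposed sum (\ref{2.8}), thereby exploiting that the two degree-$k$ slices in the complified $12$-variable system genuinely interact. Identifying the H\"older exponents that extract precisely $49/10$ — and verifying that the various paucity inputs (\ref{2.11}) and the bound of \cite[Lemma 5]{BR2015} are available in the stated ranges $k\ge 4$, $m\in\{1,2\}$ — is the technical heart of the argument.
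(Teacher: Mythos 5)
Your opening moves coincide with the paper's: Cauchy's inequality together with Lemma \ref{lemma5.1} reduces everything to bounding $\sum_{h}u_{k,1}(h)^2=\int_{[0,1)^3}|f_{k,1}(\alp,\bet)f_{k,1}(\alp,\gam)|^6\d\bfalp$, and you correctly diagnose that interpolating this between the $(6,2)$ and $(6,14)$ moments is too lossy. (Incidentally, your claimed bound $J_2\ll B^{6+\eps}$ should be $J_2\ll B^{4+\eps}$ by your own justification, but even with this correction the interpolation yields only $J_6\ll B^{22/3+\eps}$ and hence a final bound $B^{31/6}$, still above $B^5$.) The gap is that you never identify the mechanism that rescues the argument, and your fallback suggestions (complifying (\ref{2.1}) directly, or an unspecified ``weighted H\"older split with weights tuned to produce $49/10$'') are not the route taken and do not constitute a proof.

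The missing idea is to choose the H\"older endpoints as the $(6,4)$ and $(6,14)$ moments: writing $T_1=\int_{[0,1)^3}|f_{k,1}(\alp,\bet)^6f_{k,1}(\alp,\gam)^4|\d\bfalp$ and $T_2=\int_{[0,1)^3}|f_{k,1}(\alp,\bet)^6f_{k,1}(\alp,\gam)^{14}|\d\bfalp$, one has $\sum_h u_{k,1}(h)^2\le T_1^{4/5}T_2^{1/5}$ since $4\cdot\tfrac45+14\cdot\tfrac15=6$. The crucial observation is that $T_1=N_{k,1,1}(B)$ by orthogonality --- it is the very quantity under study in the case $m=1$ --- while $T_2\ll B^{14+\eps}$ by Lemma \ref{lemma6.1}. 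This produces
\[
N_{k,m,1}(B)-12B^5\ll B^{14/3}+B^{29/10+\eps}\bigl(N_{k,1,1}(B)\bigr)^{2/5},
\]
and specialising to $m=1$ gives the self-referential inequality $N_{k,1,1}(B)\ll B^5+B^{29/10+\eps}N_{k,1,1}(B)^{2/5}$, which closes to $N_{k,1,1}(B)\ll B^5$. Substituting this back in yields $B^{29/10}(B^5)^{2/5}=B^{49/10}$. Without this bootstrap --- placing the unknown quantity itself at one end of the interpolation and solving the resulting inequality --- the exponent $49/10$ is not reachable by the means you describe, so the proposal as it stands is incomplete.
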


\begin{proof} Just as in the initial stages of the proof of Lemma \ref{lemma5.3}, an 
application of Cauchy's inequality in combination with Lemma \ref{lemma5.1} yields the 
bound
\begin{equation}\label{6.1}
\sum_{1\le |h|\le 2B^k}u_{k,1}(h)v_{k,m}(h)\ll (B^{3+\eps})^{1/2}\Bigl( 
\sum_{h\in \dbZ}u_{k,1}(h)^2\Bigr)^{1/2}.
\end{equation}
The sum on the right hand side here may be again reinterpreted as the number of solutions 
of a Diophantine system, and thence by orthogonality and H\"older's inequality we obtain
\begin{equation}\label{6.2}
\sum_{h\in \dbZ}u_{k,1}(h)^2=\int_{[0,1)^3}|f_{k,1}(\alp,\bet)f_{k,1}(\alp,\gam)|^6
\d\bfalp \le T_1^{4/5}T_2^{1/5},
\end{equation}
where
\[
T_1=\int_{[0,1)^3}|f_{k,1}(\alp,\bet)^6f_{k,1}(\alp,\gam)^4|\d\bfalp
\]
and
\[
T_2=\int_{[0,1)^3}|f_{k,1}(\alp,\bet)^6f_{k,1}(\alp,\gam)^{14}|\d\bfalp .
\]

\par By orthogonality, one sees that $T_1=N_{k,1,1}(B)$, whilst by Lemma \ref{lemma6.1} 
we have $T_2\ll B^{14+\eps}$. On substituting these estimates into (\ref{6.2}) and thence 
into (\ref{6.1}), we see that
\[
\sum_{1\le |h|\le 2B^k}u_{k,1}(h)v_{k,m}(h)\ll (B^{3+\eps})^{1/2}\left( 
N_{k,1,1}(B)\right)^{2/5}(B^{14+\eps})^{1/10},
\]
so that, as a consequence of (\ref{2.12}),
\begin{equation}\label{6.4}
N_{k,m,1}(B)-12B^5\ll B^{14/3}+B^{29/10+\eps}\left( N_{k,1,1}(B)\right)^{2/5}.
\end{equation}
This estimate applies when $m=1$, and hence in particular one finds that
\[
N_{k,1,1}(B)\ll B^5+B^{29/10+\eps}\left( N_{k,1,1}(B)\right)^{2/5},
\]
whence $N_{k,1,1}(B)\ll B^5$. By substituting this upper bound back into (\ref{6.4}), we 
infer that
\[
N_{k,m,1}(B)-12B^5\ll B^{14/3}+B^{29/10+\eps}(B^5)^{2/5}\ll B^{49/10+\eps}.
\]
This completes the proof of the lemma.
\end{proof}

\section{An application of the Hardy-Littlewood method} The final case (v) concerns the 
only remaining triple not already covered in cases (i) to (iv), namely the triple 
$(k,m,n)=(3,2,1)$. For this we must modify the treatment of \S6 by introducing some crude 
estimates pertaining to the minor arcs of a Hardy-Littlewood dissection.\par

We define our Hardy-Littlewood dissection as follows. Take $\del$ to be any positive number 
with $\del<1/3$, and let $\grM$ denote the union of the intervals
\begin{equation}\label{7.1}
\grM(q,a)=\{ \alp \in [0,1):|q\alp -a|\le B^{\del-3}\},
\end{equation}
with $0\le a\le q\le B^\del$ and $(a,q)=1$. The complementary set of minor arcs is then 
$\grm=[0,1)\setminus \grM$. On writing
\begin{equation}\label{7.2}
N(B;\grB)=\int_\grB \int_0^1\int_0^1 |f_{3,1}(\alp,\bet)^6f_{3,2}(\alp,\gam)^4|\d\gam 
\d\bet \d\alp ,
\end{equation}
we see that
\begin{equation}\label{7.3}
N_{3,2,1}(B)=N(B;\grM)+N(B;\grm).  
\end{equation}
We also define the auxiliary integral
\[
u(h;\grB)=\int_\grB \int_0^1 |f_{3,1}(\alp,\bet)|^6e(-h\alp)\d\bet \d\alp .
\]
In view of the definition of $v(h)=v_{3,2}(h)$ via (\ref{2.6}) and (\ref{2.7}), we then have
\[
\sum_{|h|\le 2B^3}u(h;\grB)v(h)=\sum_{\bfx,\bfy}\int_\grB \int_0^1 
|f_{3,1}(\alp,\bet)|^6e(-(x_1^3+x_2^3-y_1^3-y_2^3)\alp)\d\bet \d\alp ,
\]
where the summation over $\bfx$ and $\bfy$ is subject to the conditions $1\le x_i,y_i\le B$ 
$(i=1,2)$ and $x_1^2+x_2^2=y_1^2+y_2^2$. Thus, by employing orthogonality and 
recalling (\ref{7.2}), we discern that
\begin{equation}\label{7.4}
N(B;\grB)=\sum_{|h|\le 2B^3}u(h;\grB)v(h).
\end{equation}

\par In general terms, our strategy makes use of a complification step resembling that used 
in both \S\S5 and 6. However, our use of a Hardy-Littlewood dissection necessitates that 
special attention be paid to the diagonal contribution restricted to minor arcs.

\begin{lemma}\label{lemma7.1} One has $u(0;\grm)=6B^3+O(B^{8/3})$.
\end{lemma}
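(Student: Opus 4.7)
The plan is to bound the major-arc piece $u(0;\grM)$ trivially and subtract it from the known asymptotic for $u(0)$. Since $u(0)=u(0;\grM)+u(0;\grm)$, and since the paucity estimate (2.11) applied with $(k,n)=(3,1)$ gives $u(0)=u_{3,1}(0)=6B^3+O(B^{8/3})$, it suffices to establish $u(0;\grM)\ll B^{8/3}$.

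First I would swap the order of integration in the definition of $u(0;\grM)$, writing
\[
u(0;\grM)=\int_\grM G(\alp)\d\alp,\qquad G(\alp)=\int_0^1|f_{3,1}(\alp,\bet)|^6\d\bet.
\]
Orthogonality in $\bet$ identifies $G(\alp)$ with the weighted count
\[
G(\alp)=\sum_{\substack{\bfx,\bfy\in[1,B]^3\\ x_1+x_2+x_3=y_1+y_2+y_3}}e\Bigl(\alp\sum_{i=1}^3(x_i^3-y_i^3)\Bigr),
\]
so the triangle inequality yields the uniform pointwise bound $|G(\alp)|\le G(0)$. The quantity $G(0)$ counts pairs of triples in $[1,B]^3$ with equal linear sum, and because for each admissible $s$ the number of triples $\bfx$ with $x_1+x_2+x_3=s$ is at most $B^2$, while there are $B^3$ triples in total, we have $G(0)\ll B^5$.

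Next I would estimate the measure of the major arcs in the standard way. From (7.1), each interval $\grM(q,a)$ has length at most $2q^{-1}B^{\del-3}$, and summing over coprime pairs $(a,q)$ with $0\le a\le q\le B^\del$ produces $|\grM|\ll B^{2\del-3}$. Combining this with the pointwise bound for $G$ gives
\[
u(0;\grM)\ll B^5\cdot B^{2\del-3}=B^{2+2\del}\ll B^{8/3},
\]
where the last inequality relies on the hypothesis $\del<1/3$. Substituting this estimate into $u(0;\grm)=u(0)-u(0;\grM)$ yields the desired conclusion.

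There is no real obstacle to overcome: the argument is a clean supremum-times-measure estimate, with the paucity input (2.11) doing all the heavy lifting. The only point worth flagging is the tight numerology — the exponent $2+2\del$ falls below $8/3$ precisely when $\del<1/3$, matching exactly the range of $\del$ already imposed by the Hardy--Littlewood dissection — so no refined analysis of the behaviour of $f_{3,1}$ on $\grM$ is required at this stage.
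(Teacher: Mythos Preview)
Your proof is correct and follows essentially the same route as the paper: both decompose $u(0)=u(0;\grM)+u(0;\grm)$, invoke (2.11) for the full integral, and dispose of $u(0;\grM)$ via the trivial bound $B^5\cdot \text{mes}(\grM)\ll B^{2+2\del}$ using $\del<1/3$. Your write-up adds a touch more detail (the explicit interpretation of $G(\alp)$ and the bound $G(0)\ll B^5$), but the argument is otherwise identical.
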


\begin{proof} On recalling (\ref{2.11}), we find that
\[
u(0;[0,1))=u_{3,1}(0)=6B^3+O(B^{8/3}).
\]
In view of (\ref{7.1}), we see that $\text{mes}(\grM)=O(B^{2\del-3})$, meanwhile, and 
hence we deduce via orthogonality that
\begin{align*}
u(0;\grM)&=\int_\grM \sum_{\substack{1\le x_i,y_i\le B\\ 
x_1+x_2+x_3=y_1+y_2+y_3}}e(\alp(x_1^3+x_2^3+x_3^3-y_1^3-y_2^3-y_3^3))\d\alp \\
&\ll B^5\text{mes}(\grM)\ll B^{2+2\del}.
\end{align*}
Thus we conclude that
\[
u(0;\grm)=u(0;[0,1))-u(0;\grM)=6B^3+O(B^{8/3}).
\]
This completes the proof of the lemma.
\end{proof}

A similarly crude estimate for the major arc contribution handles $N(B;\grM)$.

\begin{lemma}\label{lemma7.2}
One has $N(B;\grM)\ll B^{4+2\del+\eps}$.
\end{lemma}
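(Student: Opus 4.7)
The plan is to exploit the smallness of $\text{mes}(\grM)$ together with uniform-in-$\alp$ bounds on the inner integrals over $\bet$ and $\gam$. The integrand $|f_{3,1}(\alp,\bet)^6 f_{3,2}(\alp,\gam)^4|$ factorises as a product of a function of $(\alp,\bet)$ and one of $(\alp,\gam)$, so for each fixed $\alp$ the inner double integral splits as
\[
\left(\int_0^1|f_{3,1}(\alp,\bet)|^6\d\bet\right)\left(\int_0^1|f_{3,2}(\alp,\gam)|^4\d\gam\right).
\]
Thus the task reduces to bounding these two one-dimensional mean values uniformly in $\alp$, and then multiplying by the measure of $\grM$, which by (\ref{7.1}) is $O(B^{2\del-3})$.

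For the first factor, expand $|f_{3,1}(\alp,\bet)|^6$ and integrate over $\bet\in[0,1)$: orthogonality collapses the sum to those $\bfx,\bfy\in[1,B]^3$ with $x_1+x_2+x_3=y_1+y_2+y_3$, weighted by a cubic phase in $\alp$. The triangle inequality then gives, uniformly in $\alp$,
\[
\int_0^1|f_{3,1}(\alp,\bet)|^6\d\bet\le \int_0^1|f_{3,1}(0,\bet)|^6\d\bet\ll B^5,
\]
the final bound being the trivial count of solutions to the single linear equation. For the second factor, the same device reduces matters to the number of solutions of $x_1^2+x_2^2=y_1^2+y_2^2$ with $1\le x_i,y_i\le B$, which by Hua's lemma (just as invoked in the proof of Lemma~\ref{lemma5.3}) is $O(B^{2+\eps})$; this yields
\[
\int_0^1|f_{3,2}(\alp,\gam)|^4\d\gam\ll B^{2+\eps},
\]
uniformly in $\alp$.

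Combining these ingredients we arrive at
\[
N(B;\grM)\le \text{mes}(\grM)\cdot B^5\cdot B^{2+\eps}\ll B^{2\del-3}\cdot B^{7+\eps}=B^{4+2\del+\eps},
\]
as required. There is no real obstacle here: the lemma is a crude majorant, and the only point to exercise a little care is to verify that the $L^\infty$-in-$\alp$ bounds on the inner integrals are genuinely uniform, which follows immediately from the triangle-inequality reduction to $\alp=0$ in each factor.
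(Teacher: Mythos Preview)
Your proof is correct and follows essentially the same approach as the paper: both arguments bound the inner integrals by the counts $O(B^5)$ and $O(B^{2+\eps})$ for the linear and quadratic equations respectively (the latter via Hua's lemma), and then multiply by $\text{mes}(\grM)=O(B^{2\del-3})$. The only cosmetic difference is that the paper applies orthogonality in $\bet$ and $\gam$ jointly before invoking the triangle inequality in $\alp$, whereas you fix $\alp$ first and bound each inner integral separately; the substance is identical.
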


\begin{proof}
By orthogonality, it follows from (7.2) that
\[
N(B;\grM)=\sum_{\bfx,\bfy}\int_\grM e\Bigl(\alp \sum_{i=1}^5(x_i^3-y_i^3)\Bigr) \d\alp ,
\]
where the summation is over $5$-tuples $\bfx$, $\bfy$ with $1\le x_i,y_i\le B$ subject to 
the conditions
\[
\sum_{i=1}^3(x_i-y_i)=\sum_{i=4}^5(x_i^2-y_i^2)=0.
\]
The number of choices for $x_i$ and $y_i$ $(1\le i\le 3)$ is plainly $O(B^5)$. Meanwhile, 
by applying Hua's lemma (see \cite[Lemma 2.5]{Vau1997}) on a by now well-trodden path, 
the number of choices for $x_j$, $y_j$ $(j=4,5)$ is $O(B^{2+\eps})$. Thus we deduce via 
the triangle inequality that
\[
N(B;\grM)\ll B^5\cdot B^{2+\eps}\text{mes}(\grM)\ll B^{7+\eps}\cdot B^{2\del-3},
\]
and the conclusion of the lemma follows.
\end{proof}

We are now equipped to establish the final case of Theorem \ref{theorem1.1}.

\begin{lemma}\label{lemma7.3}
One has
\[
N_{3,2,1}(B)-12B^5\ll B^{5-\del/4+\eps}.
\]
\end{lemma}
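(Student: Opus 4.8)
The plan is to marry the Hardy--Littlewood dissection set up above with the complification device of \S6. Starting from the decomposition (\ref{7.3}), I would first discard the major arcs: Lemma \ref{lemma7.2} gives $N(B;\grM)\ll B^{4+2\del+\eps}$, an admissible error since $\del<1/3$. It then remains to treat $N(B;\grm)=\sum_{|h|\le 2B^3}u(h;\grm)v(h)$ as recorded in (\ref{7.4}). Here the term $h=0$ supplies the main term: by Lemma \ref{lemma7.1} one has $u(0;\grm)=6B^3+O(B^{8/3})$, while (\ref{2.9}) gives $v(0)=v_{3,2}(0)=2B^2+O(B)$, so the contribution of $h=0$ is $12B^5+O(B^{14/3})$. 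Thus it suffices to prove that
\[
\sum_{1\le|h|\le 2B^3}u(h;\grm)v(h)\ll B^{5-\del/4+\eps}.
\]

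To this end I would run the complification step already used in the proofs of Lemmas \ref{lemma5.3} and \ref{lemma6.2}: an application of Cauchy's inequality in concert with Lemma \ref{lemma5.1}, taken with $k=3$ and $m=2$, bounds the left-hand side by
\[
\Bigl(\sum_{1\le|h|\le 2B^3}v_{3,2}(h)^2\Bigr)^{1/2}\Bigl(\sum_{h\in\dbZ}u(h;\grm)^2\Bigr)^{1/2}\ll B^{3/2+\eps}\Bigl(\sum_{h\in\dbZ}u(h;\grm)^2\Bigr)^{1/2},
\]
so the matter is reduced to establishing the bound $\sum_{h\in\dbZ}u(h;\grm)^2\ll B^{7-\del/2+\eps}$.

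For the latter, write $U(\alp)=\int_0^1|f_{3,1}(\alp,\bet)|^6\d\bet$, so that $u(h;\grm)$ is the $h$-th Fourier coefficient of the bounded function $U\cdot\mathbf{1}_\grm$ on $[0,1)$. By Parseval's identity,
\[
\sum_{h\in\dbZ}u(h;\grm)^2=\int_\grm U(\alp)^2\d\alp=\int_\grm\int_0^1\int_0^1|f_{3,1}(\alp,\bet)f_{3,1}(\alp,\gam)|^6\d\gam\d\bet\d\alp.
\]
For $\alp\in\grm$, Dirichlet's theorem furnishes coprime $a,q$ with $1\le q\le B^{3-\del}$ and $|q\alp-a|\le B^{\del-3}$, and the definition (\ref{7.1}) of $\grM$ forces $q>B^\del$; hence Weyl's inequality for cubic sums---insensitive as it is to the linear coefficient---yields $|f_{3,1}(\alp,\bet)|\ll B^{1-\del/4+\eps}$ uniformly in $\bet$. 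Using this bound to discard two of the six factors $f_{3,1}(\alp,\gam)$, the triple integral above is
\[
\ll B^{2-\del/2+\eps}\int_{[0,1)^3}|f_{3,1}(\alp,\bet)^6f_{3,1}(\alp,\gam)^4|\d\bfalp=B^{2-\del/2+\eps}N_{3,1,1}(B),
\]
and since the discussion of \S2 provides $N_{3,1,1}(B)\ll B^5$, one obtains $\sum_{h\in\dbZ}u(h;\grm)^2\ll B^{7-\del/2+\eps}$, as required. Collecting the estimates yields $N_{3,2,1}(B)=12B^5+O(B^{5-\del/4+\eps})+O(B^{14/3})+O(B^{4+2\del+\eps})$, and since $\del<1/3$ forces $\del/4<1/12$, the first error term dominates, which is the asserted conclusion.

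The crux---and the reason the plain complification of \S6 does not suffice here---is that for the exponent $k=3$ the strong high-moment estimate behind Lemma \ref{lemma6.1} (valid only for $k\ge4$) is unavailable, so one cannot afford to surrender an extra block of $f_{3,1}$ to H\"older's inequality; the Hardy--Littlewood dissection is introduced precisely so that Weyl's inequality supplies the pointwise saving $B^{-\del/4}$ on the minor arcs that compensates for this loss. The one delicate point is the correct bookkeeping of the diagonal contribution restricted to the minor arcs, and this is exactly what Lemma \ref{lemma7.1} was arranged to handle.
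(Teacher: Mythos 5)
Your argument is correct and follows essentially the same route as the paper: discard the major arcs via Lemma \ref{lemma7.2}, extract the main term from $h=0$ via Lemma \ref{lemma7.1} and (\ref{2.9}), then bound the off-diagonal sum by Cauchy's inequality with Lemma \ref{lemma5.1}, Bessel/Parseval, Weyl's inequality on $\grm$, and the bound $N_{3,1,1}(B)\ll B^5$. The only cosmetic differences are that you invoke Parseval where the paper uses Bessel's inequality, and you justify $N_{3,1,1}(B)\ll B^5$ from \S2 rather than citing \cite{BW2019} directly; both are fine.
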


\begin{proof} In view of (\ref{7.3}) and Lemma \ref{lemma7.2}, we have
\[
N_{3,2,1}(B)=N(B;\grm)+O(B^{4+2\del+\eps}).
\]
Then by (\ref{7.4}), we deduce that
\[
N_{3,2,1}(B)=u(0;\grm)v(0)+\Xi+O(B^{4+2\del+\eps}),
\]
where
\[
\Xi=\sum_{1\le |h|\le 2B^3}u(h;\grm)v(h).
\]
By wielding (\ref{2.9}) in combination with Lemma \ref{lemma7.1}, we may conclude thus 
far that
\[
N_{3,2,1}(B)=(6B^3+O(B^{8/3}))(2B^2+O(B))+O(B^{4+2\del+\eps})+\Xi,
\]
whence
\begin{equation}\label{7.5}
N_{3,2,1}(B)-12B^5\ll B^{14/3}+\Xi.
\end{equation}

\par Next we recall Lemma \ref{lemma5.1} and apply the inequalities of Cauchy and Bessel 
to obtain the upper bound
\begin{equation}\label{7.6}
\Xi^2\ll B^{3+\eps}\sum_{|h|\le 2B^3}|u(h;\grm)|^2\ll B^{3+\eps}
\int_\grm \Bigl( \int_0^1 |f_{3,1}(\alp,\bet)|^6\d\bet \Bigr)^2 \d\alp .
\end{equation}
As a consequence of Weyl's inequality (see \cite[Lemma 2.4]{Vau1997}), one has
\[
\sup_{\alp \in \grm}\sup_{\bet\in \dbR}|f_{3,1}(\alp,\bet)|\ll B^{1-\del/4+\eps}.
\]
Thus, by making use of orthogonality and \cite[Theorem 1.1]{BW2019}, we obtain the 
bound
\begin{align*}
\int_\grm \Bigl( \int_0^1 |f_{3,1}(\alp,\bet)|^6\d\bet \Bigr)^2 \d\alp 
&\ll (B^{1-\del/4+\eps})^2\int_{[0,1)^3}|f_{3,1}(\alp,\bet)^6f_{3,1}(\alp,\gam)^4|
\d\bfalp \\
&=B^{2-\del/2+2\eps}N_{3,1,1}(B)\\
&\ll B^{7-\del/2+2\eps}.
\end{align*}
By substituting this estimate into (\ref{7.6}), we arrive at the bound 
$\Xi\ll B^{5-\del/4+\eps}$, and hence (\ref{7.5}) delivers the relation
\[
N_{3,2,1}(B)-12B^5\ll B^{14/3}+B^{5-\del/4+\eps}.
\]
The conclusion of the lemma follows on recalling our hypothesis that $\del$ is any positive 
number smaller than $1/3$.
\end{proof}

This completes the proof of the last case of Theorem 1.1, namely case (v). We now discern 
via Lemmata \ref{lemma3.2}, \ref{lemma4.2}, \ref{lemma5.3}, \ref{lemma6.2} and 
\ref{lemma7.3} that in cases (i) to (v) we have the estimate (\ref{2.13}). Thus, as 
discussed in the sequel to that equation, the conclusion of Theorem \ref{theorem1.1} is 
confirmed.

\bibliographystyle{amsbracket}
\providecommand{\bysame}{\leavevmode\hbox to3em{\hrulefill}\thinspace}

\end{document}